\theoremstyle{plain}
\newtheorem{theorem}{Theorem}[section]
\newtheorem{prop}[theorem]{Proposition}
\newtheorem{lemma}[theorem]{Lemma}
\newtheorem{conj}[theorem]{Conjecture}
\newtheorem{question}[theorem]{Question}
\newtheorem{cor}[theorem]{Corollary}
\newtheorem*{appthm}{Theorem 1.2}
\theoremstyle{remark}
\newtheorem{remark}[theorem]{Remark}
\newtheorem{ex}[theorem]{Example}
\theoremstyle{definition}
\newtheorem{definition}[theorem]{Definition}
\def\co{\colon\thinspace}
\def\ep{\epsilon}
\def\R{\mathbb{R}}
\def\Z{\mathbb{Z}}
\title{Many closed symplectic manifolds have infinite Hofer--Zehnder capacity}
\author{Michael Usher}\address{Department of Mathematics, University of Georgia\\ Athens, GA 30602, USA}\email{usher@math.uga.edu}
\begin{document}

\begin{abstract} We exhibit many examples of closed symplectic manifolds on which there is an autonomous Hamiltonian whose associated flow has no nonconstant periodic orbits (the only previous explicit example in the literature was the torus $T^{2n}$ ($n\geq 2$) with an irrational symplectic structure).  The underlying smooth manifolds of our examples include, for instance: the $K3$ surface and also infinitely many smooth manifolds homeomorphic but not diffeomorphic to it; infinitely many minimal four-manifolds having any given finitely-presented group as their fundamental group; and simply connected minimal four-manifolds realizing all but finitely many points in the first quadrant of the geography plane below the line corresponding to signature $3$.    The examples are constructed by   performing symplectic sums along suitable tori and then perturbing the symplectic form in such a way that hypersurfaces near the ``neck'' in the symplectic sum have no closed characteristics.  We conjecture that any closed symplectic four-manifold with $b^+>1$ admits symplectic forms with a similar property.
\end{abstract}

\maketitle

\section{Introduction}
A smooth, compactly supported function $H\co X\to\mathbb{R}$ on a symplectic manifold $(X,\omega)$ induces a ``Hamiltonian vector field'' $X_H$ by the prescription that $\omega(X_H,\cdot)=-dH$.  If $c$ is a regular value of $H$, then $X_H$ is tangent to the level set $Y=H^{-1}(c)$; indeed the restriction of $X_H$ to $Y$ generates the characteristic line bundle $TY^{\omega}=\{v\in TY|(\forall w\in TY)(\omega(v,w)=0)\}$.  

It is natural to ask whether the flow of $X_H$ has any periodic orbits on the level set $Y$, or equivalently, whether the ``characteristic foliation'' generated by the characteristic line bundle $TY^{\omega}$ has any closed leaves.  In the case that $Y$ has contact type, the existence of such a closed leaf is postulated by the Weinstein conjecture (now a theorem in many cases, for instance whenever $\dim Y=3$ or whenever $X$ is a subcritical Stein manifold).  If no geometric assumption is made on $Y$, however, a closed leaf need not exist: examples of this in $\mathbb{R}^{2n}$ were supplied by Ginzburg \cite{Gi95},\cite{Gi97} for $2n\geq 6$ and (with $H$ only $C^2$) by Ginzburg--G\"urel \cite{GiGu} for $2n=4$; by Darboux's theorem one can consequently find a Hamiltonian $H$ on an arbitrary symplectic manifold of dimension at least six (or four if one allows $H$ to be only $C^2$) for which there is a regular level surface on which $X_H$ has no periodic orbits.

One can instead ask a weaker question: Given a (compactly supported) Hamiltonian $H\co X\to\mathbb{R}$, must $X_H$ have some nonconstant periodic orbit? (Obviously $H$ has constant periodic orbits at its critical points.)  If the answer to this question is affirmative for all Hamiltonians, it is not difficult to show (by postcomposing  Hamiltonians $H$ with  appropriate functions $f\co \R\to\R$) that in fact for any Hamiltonian $H$ the set of regular values $c$ such that the characteristic foliation of $H^{-1}(c)$ has a closed leaf is dense in the range of $H$.  In this case we say that the ``dense existence property'' holds in $(X,\omega)$.

It was shown in \cite{HZ1} that the dense existence property holds in $\mathbb{R}^{2n}$ with its standard symplectic structure.  There are also many closed manifolds in which the dense existence property is known to hold, as we will shortly recall.

In fact, to the best of my knowledge, all closed manifolds $(X,\omega)$ for which the dense existence property holds in fact have finite \emph{Hofer--Zehnder capacity} \[ c_{HZ}(X,\omega)=\sup\left\{\max H-\min H\left|\begin{array}{cc} H\co X\to\mathbb{R}\mbox{ is smooth and all periodic }\\ \mbox{orbits of the Hamiltonian vector} \\ \mbox{field }X_H  \mbox{ of period }\leq 1\mbox{ are constant}\end{array}\right.\right\}.\]  Consequently by \cite[Theorem IV.4]{HZ94}  these manifolds obey a stronger ``almost existence property'': the set of regular values $c$ of $H$ for which $X_H$ has a periodic orbit on $H^{-1}(c)$ has full measure in the range of $H$.

For all known (at least to me) examples of closed symplectic manifolds on which the dense existence property holds, this fact can be proven as follows.
For a natural number $g$, let us say that a symplectic manifold $(X,\omega)$ is \emph{$GW_g$-connected} if there exists a nonvanishing Gromov--Witten invariant of the form 
\[GW_{g,k+2,A}^{X}([pt],[pt],\beta_1,\ldots,\beta_k;B), \] where $[pt]$ denotes the homology class of a point in $X$; $\beta_1,\ldots,\beta_k$ are some other classes in $H_*(X;\Z)$; and $B\in H_*(\bar{M}_{g,k+2},\mathbb{Q})$.  In other words, this invariant should enumerate genus-$g$ pseudoholomorphic representatives of the class $A$ which pass through two generic points and possibly satisfy some other constraints in $X$ represented by the $\beta_i$ and a constraint on the  underlying stable curve represented by $B$.

We have \begin{theorem}\cite[Corollary 1.19]{Lu}\label{luthm} Any closed symplectic manifold $(X,\omega)$ which is $GW_g$-connected has finite Hofer--Zehnder capacity.  Indeed, if $GW_{g,k+2,A}^{X}([pt],[pt],\beta_1,\ldots,\beta_k;B)\neq 0$, then  \[ c_{HZ}(X,\omega)\leq \langle[\omega],A\rangle.\]
\end{theorem}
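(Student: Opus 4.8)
The plan is to prove the following equivalent assertion: if $H\co X\to\mathbb{R}$ satisfies $\max H-\min H>\langle[\omega],A\rangle$, then $X_H$ has a nonconstant periodic orbit of period $\le 1$; this immediately gives $c_{HZ}(X,\omega)\le\langle[\omega],A\rangle$. So suppose for contradiction that no such orbit exists. I would first normalize: after adding a constant assume $\min H=0$ and $\max H=c$, fix $\delta>0$ small enough that $c-2\delta>\langle[\omega],A\rangle$, and replace $H$ by $K=f\circ H$ for a smooth nondecreasing $f\co[0,c]\to[0,c-2\delta]$ with $0\le f'\le 1$, $f\equiv 0$ near $0$, and $f\equiv c-2\delta=:\kappa$ near $c$. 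Since $X_K=f'(H)X_H$ along regular level sets and $f'\le 1$, the Hamiltonian $X_K$ still has no nonconstant periodic orbit of period $\le 1$; moreover $K$ is identically $0$ on a nonempty open set $U_-$ (containing the minimum) and identically $\kappa$ on a nonempty open set $U_+$ (containing the maximum), and the gap $\eta:=\kappa-\langle[\omega],A\rangle$ is a fixed positive number. It now suffices to produce a nonconstant periodic orbit of $X_K$ of period $\le 1$, since reparametrizing back to $H$ only multiplies the period by $f'\le 1$.

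Next I would introduce the Hamiltonian-perturbed pseudoholomorphic curve moduli space that computes the invariant in the statement. Fix a generic $\omega$-compatible $J$, points $p_\pm\in U_\pm$, and cycles representing $\beta_1,\dots,\beta_k$ and $B$. Consider genus-$g$ curves $u$ with $k+2$ marked points representing $A$, where the first two marked points are replaced by punctures carrying long cylindrical ends, and the Cauchy--Riemann operator is perturbed by $X_K$ paired against a one-form $\beta$ on the domain which equals $dt$ on the two cylindrical ends and is cut off to $0$ on the rest of the surface (with the standard geometric conditions on $d\beta$ that produce an a priori energy bound); the curve is required to be asymptotic at the two punctures to the \emph{constant} orbits at $p_-$ and $p_+$ — which is meaningful precisely because $X_K$ vanishes near $p_\pm$ — while the remaining marked points and the stabilized domain carry the $\beta_i$- and $B$-constraints. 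Because $GW^{X}_{g,k+2,A}([pt],[pt],\beta_1,\dots,\beta_k;B)\neq 0$, a cobordism argument interpolating between $\beta$ and the zero one-form connects this problem to the honest Gromov--Witten problem; the hypothesis that $X_K$ has no nonconstant periodic orbit of period $\le 1$ forces every Floer asymptotic encountered along this cobordism to be a constant orbit, i.e.\ a critical point of $K$, so that the only degenerations are the usual sphere/node (and Morse) degenerations already accounted for by the Gromov--Witten machinery. Hence the perturbed moduli space is nonempty.

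Finally I would run the energy estimate against a solution $u$. On one hand, the geometric hypotheses on $\beta$ and the usual Floer energy identity bound the energy by $\langle[\omega],A\rangle$ plus an error that tends to $0$ as the cylindrical necks are taken longer, so $E(u)\le\langle[\omega],A\rangle+\epsilon$ with $\epsilon$ as small as we wish. On the other hand, along the first cylindrical end $u(s,\cdot)$ converges as $s\to-\infty$ to the constant orbit at $p_-\in\{K=0\}$ and along the second to the constant orbit at $p_+\in\{K=\kappa\}$, so $u$ must ``climb'' the Hamiltonian from its $0$-level region to its $\kappa$-level region; the standard estimate that the energy of a solution dominates the drop of the action functional $\mathcal{A}_K$ between its asymptotics — here equal to $\kappa$ — gives $E(u)\ge\kappa-\epsilon$. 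Taking $\epsilon<\eta/2$, the two inequalities force $\langle[\omega],A\rangle+\epsilon\ge E(u)\ge\langle[\omega],A\rangle+\eta-\epsilon>\langle[\omega],A\rangle+\epsilon$, a contradiction. Therefore $X_K$, and hence $X_H$, has a nonconstant periodic orbit of period $\le 1$, and $c_{HZ}(X,\omega)\le\langle[\omega],A\rangle$ follows.

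The main obstacle is the construction and control of the perturbed moduli space in full generality: transversality for higher-genus, possibly nodal and multiply covered domains requires working with virtual moduli cycles (or an equivalent abstract perturbation scheme), and one must verify that the Hamiltonian perturbation and the cobordism to the unperturbed Gromov--Witten problem are compatible with that virtual framework, so that the perturbed invariant really equals the given Gromov--Witten number and is in particular realized by an honest solution. A secondary but genuine technical point is the bookkeeping — choosing the reparametrization $f$, the profile of $\beta$, the neck lengths, and the placement of the plateaus $U_\pm$ — so that the error in the energy identity is uniformly controllable by the single small parameter $\epsilon$ while the gap $\eta=\kappa-\langle[\omega],A\rangle$ stays fixed and positive.
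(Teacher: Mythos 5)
The first thing to say is that the paper contains no proof of this statement: it is imported wholesale from Lu \cite[Corollary 1.19]{Lu} (whose argument descends from Hofer--Viterbo \cite{HV} and Liu--Tian), and the only proof given in the appendix is of Theorem 1.2, not of this theorem. Your strategy --- normalize $H$ to a function $K$ with plateaus near the extrema, couple $X_K$ to the curves counted by the nonvanishing two-point Gromov--Witten invariant through a one-form $\beta$ supported near two ends, and play the topological energy bound $\langle[\omega],A\rangle$ against the oscillation $\kappa$ --- is exactly the route taken in that literature, so the outline is the right one.

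As a proof, however, it has genuine gaps, and they are precisely the two points you defer. The sentence ``hence the perturbed moduli space is nonempty'' is the actual content of the theorem: one must prove a compactness/cobordism statement, in the virtual-cycle setting (genus $g$, arbitrary $A$, nodal and multiply covered degenerations), showing that along the homotopy from $\beta\equiv 0$ to the coupled equation the only non-Gromov--Witten degenerations are breakings along periodic orbits whose periods are controlled by $\int\beta\le 1$, so that the no-fast-orbit hypothesis excludes them, and that this control is compatible with the abstract perturbation scheme; establishing this is the bulk of Lu's paper and cannot be cited into existence from inside the proof. Secondly, your boundary condition is not well posed as stated: the ``constant orbits at $p_\pm$'' are totally degenerate (every point of the plateaus $U_\pm$ is a critical point of $K$), so asymptotic convergence to a prescribed constant is not a Fredholm asymptotic condition. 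The arguments in the literature instead keep honest marked-point constraints $u(z_\pm)=p_\pm$ (the punctures are removable because the equation is unperturbed where $K$ is locally constant), and the lower bound comes not from a Floer action-drop at the ends but from the curvature term in the energy identity for the coupled equation --- schematically $0\le E(u)\le\langle[\omega],A\rangle-\kappa+\int K(u)\,d\beta$ with $d\beta$ of a fixed sign and $K\ge 0$ --- which is where the sign and cut-off conditions you allude to really enter and which yields the contradiction $\kappa\le\langle[\omega],A\rangle$ directly. With those repairs your sketch coincides with Lu's proof; without them it is an outline of the known argument rather than a proof.
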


This criterion applies to a variety of manifolds: for instance it applies to all closed symplectic two-manifolds, and also all closed toric symplectic manifolds (in the latter case one can even take $g=0$).  
In dimension four, an important dichotomy among smooth oriented manifolds $X$ is determined by the characteristic number $b^+(X)$, defined to be the maximal dimension of a subspace of $H^2(X;\R)$ on which the quadratic form $\alpha\mapsto \langle \alpha\cup\alpha,[X]\rangle$ is positive definite.  Of course if $(X,\omega)$ is a symplectic four-manifold the fact that $\int_X\omega\wedge\omega>0$ shows that $b^+(X)\geq 1$. The Seiberg--Witten invariants of $X$ (and hence, by Taubes' celebrated results \cite{Tbook}, also certain Gromov--Witten invariants of $(X,\omega)$) behave rather differently according to whether $b^+(X)=1$ and $b^+(X)>1$.  In particular, we have:

\begin{theorem}[Li-Liu]\label{llthm} Any symplectic four-manifold with $b^+=1$ is $GW_g$-connected for some $g$.  Consequently all such symplectic four-manifolds have finite Hofer--Zehnder capacity.
\end{theorem}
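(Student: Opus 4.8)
The plan is to deduce this from Taubes' correspondence between Seiberg--Witten and Gromov--Witten invariants, adapted by Li and Liu to the $b^+=1$ situation where Seiberg--Witten invariants become chamber-dependent. (Once $GW_g$-connectedness is known, the assertion about $c_{HZ}$ is immediate from Theorem \ref{luthm}.) Write $K=K_X$, and for $e\in H^2(X;\Z)$ let $\mathfrak{s}_e$ be the $\mathrm{Spin}^c$ structure with $c_1(\mathfrak{s}_e)=2e-K$; the associated Seiberg--Witten moduli space has expected dimension governed by $e^2-e\cdot K$, and --- in the chamber singled out by $\omega$, when there is one --- it corresponds under Taubes' theorem to a count $Gr(e)$ of embedded $J$-holomorphic curves Poincar\'e dual to $e$, of genus $g(e)=1+\tfrac12(e^2+e\cdot K)$, through $d(e)=\tfrac12(e^2+e\cdot K)$ generic points. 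The numerical coincidence $g(e)=1+d(e)$ is what makes everything go: if I can produce a class $e$ with $Gr(e)\neq0$ and $d(e)\geq2$, then (modulo the translation issues noted at the end) I obtain a nonvanishing invariant $GW^{X}_{g(e),\,d(e),\,\mathrm{PD}(e)}([pt],[pt],[pt],\ldots,[pt];[\bar{M}_{g(e),d(e)}])$ with $d(e)$ point insertions, and $(X,\omega)$ is $GW_g$-connected with $g=g(e)$.

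Such a class $e$ I would extract from the wall-crossing formula. In the $\omega$-chamber one has $Gr(0)=\pm1$ and $Gr(K)=\pm1$ (just as for $b^+>1$); combining the general $b^+=1$, $b_1=0$ wall-crossing formula --- the two chamber-invariants of $\mathfrak{s}_e$ differ by $\pm1$ whenever $e^2-e\cdot K\geq0$ --- with the charge-conjugation identity $\overline{\mathfrak{s}_e}=\mathfrak{s}_{K-e}$ should give a relation of the shape $Gr(e)=\pm\,Gr(K-e)\pm1$ for every $e$ with $e^2\geq e\cdot K$; in particular at least one of $Gr(e),Gr(K-e)$ is then nonzero. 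Now fix an integral class $h$ with $h^2>0$ and $h\cdot\omega>0$ (possible: the intersection form has a positive direction, and one can move an integral class into the half-cone determined by $[\omega]$) and take $e=nh$ with $n$ large. Then $e^2-e\cdot K=n^2h^2-n(h\cdot K)>0$, so the relation applies; and $d(nh)=\tfrac12(n^2h^2+n(h\cdot K))$ and $d(K-nh)=\tfrac12(2K^2-3n(h\cdot K)+n^2h^2)$ both exceed $2$, being dominated by $n^2h^2>0$. Whichever of $Gr(nh),Gr(K-nh)$ is nonzero then supplies the required class.

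This argument uses that $\omega$ determines a ``Taubes chamber'' in which $SW=Gr$, which is fine when $K\cdot\omega\geq0$; when $K\cdot\omega<0$ the manifold is rational or ruled (a theorem of A.-K. Liu), and these I would treat directly, which is elementary: on $\mathbb{CP}^2$ the line class $\ell$ gives $GW_{0,2,\ell}([pt],[pt];[\bar{M}_{0,2}])=1$, on $S^2\times S^2$ the bidegree-$(1,1)$ class works the same way, on an $S^2$-bundle over a surface $\Sigma_h$ a ``section plus fibres'' class yields a nonzero genus-$h$ invariant, and the remaining (non-minimal) rational and ruled cases reduce to these by blowing down --- a Gromov--Witten invariant of a pulled-back class, with point constraints placed off the exceptional curves, is unchanged under the blow-down. (This last observation in fact also lets me assume $X$ minimal throughout.)

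I expect the real difficulty to be concentrated in two places, for both of which I would lean on the established literature (Taubes, and Li--Liu). First, and most seriously, one must actually know that Taubes' theorem $SW=Gr$ persists in the $\omega$-chamber for the classes $e=nh$ in play when $b^+=1$; this is the substance of Li--Liu's extension of Taubes' work and is far deeper than the formal wall-crossing manipulation above. Second, $Gr$ counts \emph{possibly disconnected} embedded curves, with torus components weighted, so converting $Gr(e)\neq0$ into a nonvanishing \emph{connected} Gromov--Witten invariant carrying two point insertions --- exactly the shape demanded by the definition of $GW_g$-connectedness --- requires exploiting the multiplicativity of $Gr$ under disjoint unions and replacing the fundamental class $[\bar{M}_{g,k+2}]$ by a suitable boundary stratum $B$ along which the stable map breaks into the relevant configuration. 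I would expect most of the actual write-up to go into making this last translation precise.
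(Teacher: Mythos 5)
Your overall strategy---Li--Liu's $b^+=1$ version of Taubes' $SW=Gr$, combined with wall crossing and charge conjugation to force a nonvanishing Gromov--Taubes invariant, then translating that into a nonzero connected genus-$g$ invariant with two point insertions---is the same as the paper's, but as written it has two genuine gaps. First, you invoke the wall-crossing formula only for $b_1=0$, and your separate treatment of the $K\cdot\omega<0$ (rational/ruled) case does not cover the remaining $b^+=1$ manifolds with $b_1>0$ (for instance hyperelliptic surfaces, where $\kappa_X$ is torsion, so they land in your ``main'' case with $b_1=2$). The paper handles arbitrary $b_1$ by working throughout with the Li--Liu modified invariants $Gr'(e)(\gamma_1\wedge\cdots\wedge\gamma_{b_1(X)})$ and the chambered invariants $SW_{\pm}(e)(\gamma_1\wedge\cdots\wedge\gamma_{b_1(X)})$, for which the wall-crossing difference is $\pm1$; correspondingly the final Gromov--Witten invariant carries the $\gamma_i$ as extra insertions, and the relevant requirement is $d([\omega])-\tfrac12 b_1(X)\geq 2$ point constraints, not just $d(e)\geq 2$.

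Second, and more seriously, the step you yourself flag---passing from a nonzero Taubes count to a nonvanishing \emph{connected} invariant through two points---cannot be carried out by ``multiplicativity of $Gr$ plus a boundary stratum $B$ of $\bar{M}_{g,k+2}$'': boundary strata of $\bar{M}_{g,k+2}$ parametrize connected nodal curves, not the disjoint, torus-weighted configurations Taubes counts, and for a general class the point constraints may be distributed among several components, so nonvanishing of the disconnected count does not by itself produce a nonzero connected invariant with two point insertions. The paper sidesteps this by taking $e$ to be a rescaled integral $[\omega]$ itself: the rescaling makes $\kappa_X-[\omega]$ have negative symplectic area, so $Gr'(\kappa_X-[\omega])=0$ and $Gr([\omega])=\pm1$ with no ``whichever is nonzero'' branching; and, crucially, McDuff's argument shows curves in class $PD[\omega]$ are automatically connected (an exceptional-sphere component $E$ would force $PD[\omega]\cdot E=-1$, contradicting positivity of $\omega$ on exceptional spheres, and the light cone lemma then excludes several disjoint components of nonnegative square because $\langle[\omega]\cup[\omega],[X]\rangle>0$). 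After connectedness, Ionel--Parker's comparison of $Gr$ with Gromov--Witten invariants applies with all disconnected contributions vanishing. Your class $nh$ could in fact be salvaged by a parallel connectedness argument (for $n\geq 2$ an exceptional-sphere component would force $nh\cdot E=-1$, which is impossible), but it is that argument, not the stratum-$B$ device, that is the missing ingredient.
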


The proof of this theorem merely requires one to straightforwardly assemble various ingredients that are scattered through the literature (mostly in work of Li and Liu, hence the attribution); for the reader's convenience we provide an explicit proof in the appendix.

The present paper, on the other hand, is about closed symplectic manifolds on which the dense existence property does \emph{not} hold; thus there exist Hamiltonians on these manifolds whose flows have no nonconstant periodic orbits.  To avoid excessive use of negatives we make the following definitions:

\begin{definition} \label{aperiodic} Let $X$ be a smooth manifold and let $\omega$ be a symplectic form on $X$.  We say that $\omega$ is \emph{aperiodic} if there exists a compactly supported smooth function $H\co X\to\mathbb{R}$ which is not everywhere locally constant such that the Hamiltonian flow of $H$ has no nonconstant periodic orbits.
\end{definition}
\begin{definition} \label{nearby} Let $(X,\omega)$ be a symplectic manifold and let $Y$ be a coorientable (equivalently, orientable) closed hypersurface in $X$.  We say that $Y$ \emph{violates the nearby existence property} with respect to $(X,\omega)$ if there is a tubular neighborhood $\psi\co (-\ep,\ep)\times Y\hookrightarrow X$ such that for every $s\in (-\ep,\ep)$ the characteristic foliation of the hypersurface $\psi(\{s\}\times Y)$ has no closed leaves.  Otherwise we say that $Y$ satisfies the nearby existence property.
\end{definition}

One has:
\begin{prop} A symplectic form $\omega$ on a closed manifold $X$ is aperiodic if and only if there is a closed coorientable hypersurface $Y\subset X$ which violates the nearby existence property with respect to $(X,\omega)$.
\end{prop}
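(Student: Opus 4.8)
The proposition amounts to unwinding the two definitions through the standard correspondence between Hamiltonian orbits and characteristic foliations, so the plan is to isolate that correspondence as a lemma and then perform an explicit construction in each direction. The lemma I would record first is: if $G\co X\to\R$ is smooth and $c$ is a regular value of $G$, then on the closed hypersurface $Y=G^{-1}(c)$ the characteristic line bundle $TY^{\omega}$ is spanned at every point by the restriction $X_G|_Y$, which is moreover nowhere zero there. This is immediate: $dG(X_G)=-\omega(X_G,X_G)=0$ shows $X_G$ is tangent to $Y$; for $w\in TY$ one has $\omega(X_G,w)=-dG(w)=0$; $X_G$ is nonzero along $Y$ since $\omega$ is nondegenerate and $dG\neq0$ on $Y$; and $TY^{\omega}$ has rank one because a hypersurface is coisotropic. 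Hence the characteristic foliation of $Y$ consists precisely of the images of the orbits of $X_G|_Y$, so a closed leaf of it (necessarily an embedded circle) is the same thing as a nonconstant periodic orbit of $X_G$ contained in $Y$.

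For the implication that a hypersurface $Y$ violating the nearby existence property forces $\omega$ to be aperiodic, I would begin from a witnessing tubular neighborhood $\psi\co(-\ep,\ep)\times Y\hookrightarrow X$, fix a nonzero $f\in C^{\infty}_c((-\ep,\ep))$, and let $H\co X\to\R$ be $f$ composed with the projection to $(-\ep,\ep)$ and with $\psi^{-1}$ on the image of $\psi$, extended by $0$. Because $f$ is compactly supported, $H$ is globally smooth and compactly supported, and it is not everywhere locally constant since $f'$ is somewhere nonzero. Writing $s$ for the coordinate on $(-\ep,\ep)$, one has $dH=f'(s)\,ds$ on the collar and $dH=0$ off it, so $X_H$ vanishes exactly off the open set $U=\{\psi(s,y):f'(s)\neq0\}$. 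The crux is to show any nonconstant periodic orbit of $X_H$ lies in a single hypersurface $\psi(\{s\}\times Y)$: the orbit avoids the zero set of $X_H$, so it lies in $U$; it lies in one level set $H^{-1}(c)$; and $H^{-1}(c)\cap U$ is the disjoint union of the compact hypersurfaces $\psi(\{s\}\times Y)$ as $s$ ranges over the discrete set of points with $f(s)=c$ and $f'(s)\neq0$, each such hypersurface being open in that intersection, so connectedness confines the orbit to one of them. On that hypersurface $f'(s)\neq0$, so locally it is a regular level set of $H$, and the lemma exhibits the orbit as a closed leaf of its characteristic foliation --- contradicting the defining property of $\psi$. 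Hence $\omega$ is aperiodic.

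For the converse, given an $H$ witnessing aperiodicity of $\omega$, I would note that $H$, not being everywhere locally constant, is nonconstant on some connected component of $X$, so the interior of the image of $H$ on that component is a nonempty open interval; since the critical set of $H$ is compact its image is a closed subset of $\R$, so the regular values of $H$ form an open set, which by Sard is also dense, and I can choose a regular value $c$ inside that interval. Then $Y:=H^{-1}(c)$ is a nonempty closed hypersurface --- it contains no whole component, as that would make $c$ a critical value --- coorientable because it is a regular level set. Choosing $\ep>0$ so that $[c-\ep,c+\ep]$ consists of regular values and flowing the gradient-like vector field $\nabla H/|\nabla H|^{2}$ of any Riemannian metric produces an embedding $\psi\co(-\ep,\ep)\times Y\hookrightarrow X$ with $\psi(\{s\}\times Y)=H^{-1}(c+s)$. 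By the lemma a closed leaf of the characteristic foliation of any $\psi(\{s\}\times Y)$ would be a nonconstant periodic orbit of $X_H$, of which there are none; so $Y$ violates the nearby existence property.

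I expect the only genuinely delicate point to be the confinement argument in the first direction --- excluding a nonconstant periodic orbit of the constructed $H$ that migrates between distinct level hypersurfaces --- which rests on the orbit avoiding the zero locus of $X_H$ together with each relevant piece of a level set being simultaneously compact and open within that level set. The remaining ingredients (smoothness of $H$ across the ends of the collar, existence of a suitable regular value, and the standard collar construction from a gradient-like flow) are routine.
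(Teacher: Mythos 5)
Your proposal is correct and follows essentially the same route as the paper: in one direction, pushing a compactly supported function of the collar parameter forward to a Hamiltonian whose nonconstant orbits are forced into single slices, and in the other, taking a regular level set and building the collar from a gradient-like flow so that the slices are nearby level sets. The only cosmetic difference is your confinement step (conservation of $H$ plus avoidance of the zero locus plus openness of each slice in the level set), where the paper instead observes directly that $\iota_{X_H}\omega$ vanishes on each slice's tangent space, so $X_H$ is tangent to every slice; both arguments are sound.
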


\begin{proof} If $Y$ violates the nearby existence property, let $\psi\co (-\ep,\ep)\times Y\hookrightarrow X$ be a tubular neighborhood as in Definition \ref{nearby}.  Let $f\co (-\ep,\ep)\to\mathbb{R}$ be any not-identically-zero compactly supported smooth function, and define $H\co X\to\mathbb{R}$ by $H(x)=0$ if $x\notin Im(\psi)$ and $H(\psi(s,y))=f(s)$.  Then $H$ is a smooth function such that $\iota_{X_H}\omega$ vanishes on the tangent space to each $\psi(\{s\}\times Y)$, while $X_H$ is zero outside the image of $\psi$.  So all orbits of $X_H$ are either constant or nonconstant and tangent to a leaf of the characteristic foliation of some $\psi(\{s\}\times Y)$; the latter case occurs precisely when $s$ is not a critical point of $f$, and in this case $X_H$ is nowhere-vanishing along the orbit.  So since the characteristic foliations of the $\psi(\{s\}\times Y)$ are assumed to have no closed leaves, $X_H$ has no nonconstant periodic orbits.

Conversely, suppose that $X_H$ has no nonconstant periodic orbits and let $Y=H^{-1}(z)$ for some regular value $z$ of $H$.  In particular $dH\co TX|_Y/TY\to\mathbb{R}$ provides a coorientation for $Y$.  Since $H$ has compact support, its critical values form a compact subset of $\mathbb{R}$, so for some $\ep>0$ there will be no critical values in the interval $[z-\ep,z+\ep]$.  Using the gradient flow of $H$ with respect to a suitable metric (and the compactness of the support of $H$) we can construct a tubular neighborhood $\psi\co (-\ep,\ep)\times Y\to H^{-1}(z-\ep,z+\ep)$ so that $\psi(\{s\}\times Y)=H^{-1}(z+s)$.  Since $dH$ vanishes nowhere on the image of $\psi$, the Hamiltonian vector field will generate the characteristic foliation of each  $\psi(\{s\}\times Y)$ (in particular it will be nonzero everywhere on $\psi(\{s\}\times Y)$).  The leaf of the characteristic foliation through a point of $\psi(\{s\}\times Y)$ thus coincides with the (nonconstant) orbit of the point under $X_H$; since none of these orbits are periodic none of the leaves are closed.
\end{proof}

If $\omega$ is aperiodic then clearly $(X,\omega)$ necessarily has infinite Hofer--Zehnder capacity; conversely I do not know of any closed manifolds $(X,\omega)$ that are known to have infinite Hofer--Zehnder capacity without $\omega$ being aperiodic, though in principle this seems possible (if one drops the assumption that the manifold is closed then $\mathbb{R}^{2n}$ provides an example.)

There is essentially one example of an aperiodic closed symplectic manifold in the previous literature: what is known as ``Zehnder's torus'' in honor of \cite{Ze}, namely the torus $T^{2n}=(\mathbb{R}/\mathbb{Z})^{2n}$ ($2n\geq 4$) equipped with a constant-coefficient symplectic form $\omega_Z=\sum A_{ij}dx^i\wedge dx^j$ some of whose coefficients are linearly independent over $\mathbb{Q}$.  For suitable choices of the $A_{ij}$ each of the hypersurfaces $\{x_{2n}=c\}$ will have characteristic foliation generated by a vector field having some rationally independent coordinates; of course such a vector field has no periodic orbits.

In this paper we show that manifolds admitting aperiodic symplectic forms form an extremely diverse class from a topological standpoint:
\begin{theorem}\label{main}  There exist aperiodic symplectic forms on each of the following classes of closed manifolds:
\begin{itemize} \item[(i)] The elliptic complex surface $E(n)_{p,q}$ for $n\geq 2$ and $p,q\in\mathbb{Z}_+$ (where $E(2)_{1,1}$ is the $K3$ surface), and also each of the Fintushel--Stern \cite{FS} knot surgery manifolds $E(n)_K$ where $K$ is any fibered knot and $n\geq 2$.
\item[(ii)] For any nonempty finite presentation of a group $G$, the four-manifold $X_G$ produced in \cite[Theorem 4.1]{Gom} having $\pi_1(X_G)=G$, and also the minimal four-manifolds of \cite[Theorem 6.2]{Gom} having fundamental group $G$ and prescribed Euler characteristic and signature.
\item[(iii)] For all but finitely many pairs of integers $(\chi,c)$ obeying $0\leq c\leq 8\chi+2$, simply-connected minimal four-manifolds constructed in \cite{ABBKP} whose Euler characteristic $e$ and signature $\sigma$ obey $\frac{e+\sigma}{4}=\chi$ and $3\sigma+2e=c$.
\item[(iv)] For any linear symplectomorphism $\psi\in SL(2n-2,\mathbb{Z})\cap Sp(2n-2,\mathbb{R})$ of the $(2n-2)$-torus such that $1$ is an eigenvalue of $\psi$, the manifold $X_{\psi}=S^1\times T_{\psi}$ where $T_{\psi}$ is the mapping torus of $\psi$, \emph{i.e.}, \[ T_{\psi}=\frac{\mathbb{R}\times T^{2n-2}}{(t+1,v)\sim (t,\psi(v))}.\] 
\end{itemize}
\end{theorem}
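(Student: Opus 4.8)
The plan is to reduce Theorem~\ref{main} to a single lemma about symplectic sums, and then to check that lemma's hypotheses for each of the four families. The lemma I would aim for is: \emph{if $(X,\omega)$ arises as a symplectic sum $(X_1,\omega_1)\#_V(X_2,\omega_2)$ along a $(2n-2)$-dimensional torus $V$ with trivial normal bundles, so that the ``neck'' is symplectomorphic to $(V\times S^1\times(-\ep,\ep),\ \pi^*\omega_V+ds\wedge d\phi)$ for a constant-coefficient symplectic form $\omega_V$ on $V$ and contains the central hypersurface $Y=V\times S^1\cong T^{2n-1}$, and if moreover the class $[C\times S^1]\in H_2(X;\Q)$ is nonzero for at least one coordinate circle $C\subset V$ (where $S^1$ denotes the meridian), then $\omega$ may be perturbed, by a $C^\infty$-small closed form, to an aperiodic symplectic form.} Granting this, each of (i)--(iv) would follow by realizing the manifold as such a symplectic sum and verifying the homological condition; family (iv) I would instead handle directly, as a ``twisted Zehnder torus'', since $X_\psi=S^1\times T_\psi$ is itself already very close to the model.

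To prove the lemma, note that $\omega$ restricts to the \emph{same} constant-coefficient form $\pi^*\omega_V$ on every slice $Y_s=V\times S^1\times\{s\}$, whose characteristic foliation is the linear one in the $\partial_\phi$ direction and is full of closed orbits. I would perturb $\omega$ by a small closed $2$-form $\mu'$ chosen so that on a neighborhood of $Y$ it equals a \emph{constant-coefficient} form $\Omega_1$ not involving $ds$; then $\omega+\mu'=\pi^*\omega_V+ds\wedge d\phi+\Omega_1$ near $Y$, so every nearby slice $Y_s$ carries the single constant-coefficient corank-$1$ form $\pi^*\omega_V+\Omega_1$, and it suffices that the kernel line of this form have rationally independent coordinates --- exactly as for Zehnder's torus, so that none of the $Y_s$ has a closed characteristic. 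Such a $\mu'$ exists as a genuine closed form on $X$ restricting to $\Omega_1$ near $Y$ precisely when the cohomology class of $\Omega_1$ lies in the image of the restriction map $H^2(X;\R)\to H^2(Y;\R)$ (a bump-function/relative-primitive argument, using that small exact forms on a torus have small primitives, then produces $\mu'$ of the required size); and a short linear-algebra computation shows that this image contains a class with a nonzero ``$e_i^*\wedge f^*$'' component (with $f^*$ dual to the meridian) exactly when $[C_i\times S^1]\neq 0$ in $H_2(X;\Q)$. Given one such class, replacing it by a generic real multiple makes the kernel line irrational, because the rationality conditions on that line, as functions of the scaling parameter, cut out only a countable set.

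For (i)--(iii), the relevant manifolds are, in the cited references, built as (iterated) symplectic sums along square-zero symplectic tori --- elliptic fibers, and the tori used in Gompf's and in the ABBKP constructions --- all of which have trivial normal bundle, so the neck has the required product form. To check the homological condition I would use Mayer--Vietoris for $X=(X_1\setminus\nu V)\cup_{T^{2n-1}}(X_2\setminus\nu V)$: the class $[C\times S^1]$ already lives on the separating hypersurface $Y$, and it survives in $H_2(X;\Q)$ as soon as some meridian-involving class in $H^2(Y)$ extends over both $X_i\setminus\nu V$; this holds whenever $H^3(X_i;\Q)=0$, and more generally whenever the connecting map $H^2(Y)\to H^3(X_i)$ does not annihilate all meridian-involving classes. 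For the elliptic surfaces and the Fintushel--Stern manifolds $E(n)_K$ with $K$ fibered this is immediate: take $X_1=E(n)$, whose $H^3$ vanishes, and take $C$ to be the $S^1$-factor of $S^1\times(S^3\setminus\nu K)$, so that $[C\times S^1]=[S^1\times\mu_K]\neq0$ because $[\mu_K]$ generates $H_1(S^3\setminus\nu K)$; for the Gompf and ABBKP families one must choose the summing torus so that the analogous statement holds and then track $[C\times S^1]$ through the construction. Family (iv) I would dispatch directly: on $X_\psi=S^1_s\times T_\psi$ take the symplectic form $\tilde\omega_V+ds\wedge dt$ (using that $\psi$ preserves the standard constant-coefficient form $\omega_V$, which therefore descends to the mapping torus $T_\psi$), and perturb it to $\omega'=\tilde\omega_V+\epsilon\,dt\wedge\lambda+ds\wedge dt$ for a $\psi$-invariant constant-coefficient $1$-form $\lambda$ on $V$ (which exists since $1$ is an eigenvalue of $\psi$). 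Then the characteristic vector field of each slice $\{s=c\}\times T_\psi$ is $\partial_t-\epsilon X_\lambda$, where $\iota_{X_\lambda}\omega_V=\lambda$ forces $X_\lambda\in\ker(\psi-I)$, and its once-around return map is the affine torus map $v\mapsto\psi v-\epsilon X_\lambda$, whose $k$-th iterate is $v\mapsto\psi^k v-k\epsilon X_\lambda$; this has a periodic point of period $k$ iff $k\epsilon X_\lambda\in\mathrm{Image}(\psi^k-I)+\Z^{2n-2}$, which a generic choice of $X_\lambda$ in the rational subspace $\ker(\psi-I)$ avoids for all $k$ at once (the bad choices form a countable set, since $\ker(\psi-I)$ meets each of the rational subspaces $\mathrm{Image}(\psi^k-I)$ trivially).

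The conceptual heart, and the step I expect to be most delicate, is the local analysis in the lemma: ``no closed characteristics'' is \emph{not} stable under arbitrary $C^\infty$-small perturbations of a hypersurface, so it is essential that the perturbed symplectic form have a genuinely $s$-\emph{independent}, constant-coefficient model near $Y$, forcing all nearby slices to carry the \emph{same} irrational linear foliation rather than merely $C^0$-close ones; arranging this while keeping $\mu'$ closed, small, and compatible with the global symplectic form is where the construction must be set up carefully. The other substantial (though routine-in-spirit) task is the case-by-case verification of the homological condition for the more intricate manifolds in (ii) and (iii), where one must follow the relevant classes through the fiber sums and Luttinger surgeries involved; in all of these examples $b^+(X)>1$, consistent with the conjecture stated above.
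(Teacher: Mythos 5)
Your reduction lemma is essentially the paper's Lemma \ref{irrat} in the trivial-bundle case (your condition that $[C\times S^1]\neq 0$ in $H_2(X;\mathbb{Q})$ for some coordinate circle is, by Poincar\'e duality, exactly the nonvanishing of $p_!\circ i^*\co H^2(X;\mathbb{R})\to H^1(V;\mathbb{R})$), and that part of the plan is sound. The first genuine failure is your direct treatment of (iv). Your perturbation $\tilde\omega_V+\epsilon\,dt\wedge\lambda+ds\wedge dt$ keeps the hypersurfaces $\{s\}\times T_\psi$, whose characteristic flow is the suspension of the affine map $v\mapsto \psi v-\epsilon X_\lambda$ with $X_\lambda\in\ker(\psi-I)$; but your claim that $\ker(\psi-I)$ meets $\mathrm{Im}(\psi^k-I)$ trivially is false in general. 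For the Kodaira--Thurston case $\psi=\left(\begin{smallmatrix}1&1\\0&1\end{smallmatrix}\right)$ one has $\ker(\psi-I)=\mathrm{Im}(\psi^k-I)=\mathbb{R}e_1$ and $X_\lambda=ce_1$, and the fixed-point equation $(\psi^k-I)v\equiv k\epsilon c\,e_1 \pmod{\mathbb{Z}^2}$ reads $kv_2\equiv k\epsilon c\pmod{\mathbb{Z}}$, which is solvable for every $k$ and every $\epsilon$: every slice carries closed characteristics of every period, so no choice of $\lambda,\epsilon$ makes this family of hypersurfaces work. The paper instead takes the transverse hypersurface $Y\cong S^1\times V$ (a slice in the mapping-torus direction), realizes $X_\psi$ as a twisted symplectic sum of $V\times S^2$ with itself, computes $\mathrm{Im}(p_!\circ i^*)=\ker(1-\psi^*)\neq 0$, and then Lemma \ref{irrat} makes the nearby slices carry irrational \emph{linear} flows on $T^{2n-1}$.

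The second failure is the homological check for $E(n)_K$. In knot surgery the meridian of the neck (the boundary of a normal disc to $T_K$) is glued to the Seifert longitude $\lambda_K$, not to $\mu_K$; the class $\mu_K$ is homologous to the core direction, i.e.\ to a circle of $V$ itself, so $[S^1\times\mu_K]$ is essentially the class of the summing torus and involves no meridian direction. The classes you actually need are $[S^1\times\lambda_K]$ and $[\mu_K\times\lambda_K]$, both of which are already rationally null-homologous in $S^1\times(S^3\setminus\nu K)$; and indeed the criterion genuinely fails for that neck, since $(i_-)_*\co H_1(T_K;\mathbb{R})\to H_1(S^1\times M_K;\mathbb{R})$ is injective and the forward inclusion of Lemma \ref{maincalc} then forces $\mathrm{Im}(p_!\circ i^*)=0$. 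The paper circumvents this by summing $E(n-1)$ with $E(1)_K$ along a standard fiber away from the knot region, where both summands are simply connected. Related gaps: your sufficient condition ``$H^3(X_i;\mathbb{Q})=0$'' does not by itself control extension over the complements (the correct criterion is the kernel condition of Corollary \ref{4cor}, which also needs homological independence of the two tori); the $E(n)_{p,q}$ are obtained by logarithmic transforms rather than fiber sums and are handled in the paper by performing those surgeries away from the already-constructed hypersurface in $E(n)$; and the verifications for Gompf's $X_G$ and for the ABBKP geography (in particular extending the realized region from $c\leq 8\chi-2$ up to $c\leq 8\chi+2$, which requires additional building blocks) are real case-by-case work that your proposal defers.
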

\begin{proof}
See, respectively, Section \ref{elliptic}, Section \ref{arbpi}, Section \ref{geog}, and Example \ref{kt}.
\end{proof}

The manifolds in Theorem \ref{main}(i)-(iii) are standard examples that illustrate the diversity of closed symplectic four-manifolds; thus Theorem \ref{main} shows that similar diversity exists in the class of four-manifolds admitting aperiodic symplectic forms.  For instance, the manifolds $E(n)_K$ for any fixed $n$ but varying $K$ comprise infinitely many mutually nondiffeomorphic smooth manifolds each of which is homeomorphic to $E(n)=E(n)_{1,1}$.  The $E(n)_{p,q}$ with $gcd(p,q)=1$ are also mutually nondiffeomorphic but homeomorphic, and are also nondiffeomorphic to each of the $E(n)_K$.  As $n,p,q$ vary with $gcd(p,q)=1$ and $n\geq 2$, the $E(n)_{p,q}$ comprise all of the simply connected complex minimal\footnote{Recall that a symplectic manifold is called minimal if it cannot be obtained by blowing up another symplectic manifold; it's easy to see that the property of admitting an aperiodic symplectic form is preserved when a manifold is blown up  (perform the symplectic blow up on a ball disjoint from a hypersurface violating the nearby existence property).} elliptic surfaces with $b^+>1$ (see \cite[Chapter 3]{GS} for an introduction to elliptic surfaces).  Allowing the homotopy type to vary, Theorem \ref{main}(ii) and (iii) show that we can also achieve considerable diversity in either or both of $\pi_1$ or $H^2$ while staying within the class of manifolds admitting aperiodic symplectic forms.  We have phrased Theorem \ref{main} to emphasize the point that the manifolds in question already existed in the literature---it was not necessary to modify their topological construction in any way in order to ensure that they admit aperiodic symplectic forms.

It was, however, necessary to modify the standard symplectic forms on these manifolds: each of the manifolds is, topologically speaking, obtained by the symplectic sum operation of \cite{Gom}, \cite{MW}; this operation induces a natural symplectic form on the manifold, but our aperiodic symplectic forms are small perturbations of this standard form.  In particular our forms are deformation equivalent (\emph{i.e.}, homotopic through symplectic forms) to the standard ones.  Related to this, we ask: 

\begin{question} Is the property of being aperiodic invariant under deformations of the symplectic form?
\end{question}

Current methods seem ill-equipped to answer this question either affirmatively or negatively. On the one hand, it is an inherent limitation of our method that the aperiodic symplectic forms that it produces can never have de Rham cohomology classes that lie in the subgroup $H^2(X;\mathbb{Q})\subset H^2(X;\R)$, whereas any symplectic form is deformation equivalent to symplectic forms with cohomology class in $H^2(X;\mathbb{Q})$.  On the other hand, the only way that I know how to prove that a symplectic form on a closed manifold is \emph{not} aperiodic is by applying Theorem \ref{luthm}; since Gromov--Witten invariants are unchanged under deformation of the symplectic form this technique can never yield a non-aperiodic symplectic form that is deformation equivalent to an aperiodic one.



Consistently with Theorem \ref{llthm}, all of the four-manifolds in Theorem \ref{main} have $b^+>1$.  Conversely, all known symplectic four-manifolds with finite Hofer--Zehnder capacity have $b^+=1$. 
Based on the relationship between Gromov--Witten invariants and Hofer--Zehnder capacity, this is not a coincidence,  as symplectic four-manifolds with $b^+>1$ tend not to have nonvanishing Gromov--Witten invariants counting curves satisfying nontrivial incidence conditions.   In fact, \cite[Corollary 3.4]{LP} shows that a K\"ahler surface with $b^+>1$ can \emph{never} have such invariants; consequently, if such a manifold were to have finite Hofer--Zehnder capacity this would need to be established by a method fundamentally different than any currently known.

On the other hand,  this paper produces a multitude of symplectic four-manifolds having $b^+>1$ which admit aperiodic symplectic forms.  The ease with which such examples can be constructed leads me to the following conjecture, which can be thought of as a stronger version of the converse to Theorem \ref{llthm}:

\begin{conj}\label{mainconj} Let $X$ be a closed oriented four-manifold with $b^+(X)>1$ which admits symplectic structures.  Then there exists an aperiodic symplectic form on $X$.
\end{conj}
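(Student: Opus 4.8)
The hypothesis $b^+>1$ is necessary here, since by Theorem \ref{llthm} a symplectic four-manifold with $b^+=1$ has finite Hofer--Zehnder capacity and hence admits no aperiodic form; Conjecture \ref{mainconj} asserts that it is also sufficient. The natural plan would be to extend the symplectic-sum-plus-perturbation method behind Theorem \ref{main}. Fix a symplectic form $\omega$ on $X$; since admitting an aperiodic form is preserved under blow-up one may assume $X$ is minimal. The decisive step would be to realize $(X,\omega)$, up to symplectic deformation equivalence, as a symplectic sum $X_1\#_T X_2$ along an embedded symplectic torus $T$ of self-intersection $0$. Granting this, Weinstein's theorem identifies a neighborhood of the ``neck'' with $T^2\times A$, $A$ an open annulus, carrying the product form $dx_1\wedge dx_2+dr\wedge d\theta$, where $(x_1,x_2)$ are coordinates on $T^2$ and $(r,\theta)$, with $\theta\in\R/\Z$, are coordinates on $A$. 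The slices $Y_{r_0}=\{r=r_0\}\cong T^3$ then have characteristic foliation spanned by $\partial_\theta$, all of whose leaves are closed; the goal becomes to deform $\omega$ so that these leaves become irrational lines.

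Here is the deformation. Since the classes $dx_i\wedge dr$ are exact on $T^2\times A$, every class in $H^2(T^2\times A;\R)$ is represented by a constant-coefficient combination of $dx_1\wedge dx_2$, $dx_1\wedge d\theta$ and $dx_2\wedge d\theta$, so after subtracting an exact form one may assume a given closed $2$-form agrees, near a chosen slice, with $c_1\,dx_1\wedge dx_2+c_2\,dx_1\wedge d\theta+c_3\,dx_2\wedge d\theta$ for constants $c_i$. Suppose such a closed $2$-form $\beta$ on $X$ can be found with, say, $c_3\neq 0$. A direct computation then shows that for $\omega_\ep:=\omega+\ep\beta$ and every slice near $Y_{r_0}$, the characteristic foliation of $\omega_\ep$ is spanned by the constant vector field $\ep c_3\,\partial_{x_1}-\ep c_2\,\partial_{x_2}+(1+\ep c_1)\,\partial_\theta$, and for suitable arbitrarily small $\ep$ this line on $T^3$ has no closed orbit. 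As $\omega_\ep$ is symplectic and deformation equivalent to $\omega$ for small $\ep$, the Proposition above makes it aperiodic. Note $\beta$ cannot be chosen supported near the neck---such a form restricts to an exact form on every slice, which does not suffice to move the foliation off closed leaves---so $[\omega_\ep]$ necessarily leaves $H^2(X;\mathbb{Q})$, consistent with the limitation noted in the introduction.

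Thus the whole argument reduces to two tasks: (a) exhibiting $X$, up to deformation, as a symplectic sum along a square-zero symplectic torus $T$; and (b) choosing that decomposition so that the $\beta$ above exists---equivalently, after unwinding Poincar\'e duality on the neck $T^3$, so that some ``rim torus'' $\mu\times\lambda$, with $\mu\subset T$ a nontrivial loop and $\lambda$ the meridian of $T$, is homologically nontrivial over $\R$ in $X$ (then take $\beta$ de Rham dual to it). Task (b) looks tractable: one can try to force it while selecting the decomposition---for instance by demanding that $T$ meet some surface, or by first fiber-summing with an auxiliary block designed to create the rim class without changing the diffeomorphism type of $X$, as effectively happens in the constructions behind Theorem \ref{main}---though doing this uniformly over all $X$ would take care.

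The genuine obstacle is task (a). It is not known whether every symplectic four-manifold with $b^+>1$ is, even up to deformation, a symplectic sum along a torus of self-intersection $0$---equivalently, whether each such manifold carries a square-zero symplectic torus with a standard tubular neighborhood---and this is a structural question about symplectic four-manifolds of essentially the same difficulty as the conjecture itself, which is why Conjecture \ref{mainconj} is stated as a conjecture and not a theorem. One might instead try to bypass symplectic sums, invoking Taubes--Seiberg--Witten theory (available precisely because $b^+>1$) to locate a square-zero symplectic torus inside $X$ directly and then run the same perturbation; but obtaining simultaneous control of the symplectic germ along the torus and of the global de Rham cohomology seems to require the same input. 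Absent such a structure theorem, the best currently available is to verify the conjecture on the large but special list of manifolds furnished by Theorem \ref{main}.
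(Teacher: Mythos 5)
You have correctly recognized that this statement is a conjecture: the paper offers no proof of it, only supporting evidence, namely that the construction of Theorem \ref{main} verifies it for the simply connected elliptic surfaces $E(n)_{p,q}$ and for the other listed families. Your analysis is consistent with the paper's actual machinery. The local computation you sketch on the neck $T^2\times A$ --- producing the characteristic vector field $\ep c_3\,\partial_{x_1}-\ep c_2\,\partial_{x_2}+(1+\ep c_1)\,\partial_\theta$ for the perturbed form --- is exactly Lemma \ref{irrat} in its case (i), specialized to the trivial circle bundle in dimension four, and your reformulation of the cohomological condition as the nontriviality in $H_2(X;\R)$ of a rim torus $\mu\times\lambda$ is equivalent (by Poincar\'e duality, since pairing against the rim torus reads off the $dx_i\wedge d\theta$-component of a class restricted to the neck) to the condition $Im(p_!\circ i^*)\neq 0$ that Lemma \ref{maincalc} and Corollary \ref{4cor} are designed to compute from the summands. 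Your identification of the genuine obstruction is also the right one: there is no known structure theorem producing, in an arbitrary (say minimal) symplectic four-manifold with $b^+>1$, an $\omega_V$-embedded torus hypersurface --- equivalently a symplectic-sum decomposition along a square-zero torus --- together with the homologically essential rim class needed for the perturbation; and without that input the method of the paper cannot be run uniformly, which is precisely why the statement is posed as a conjecture rather than proved. Two small cautions: the reduction to the minimal case uses that blowing up preserves the property of admitting an aperiodic form (the paper's footnote) together with the fact that blowing down does not change $b^+$, so it is legitimate but worth stating; and your task (b) is a genuine constraint even when a torus-sum decomposition exists, since the paper notes torus sums (with $b^+=1$, in \cite{ABBKP}) for which the corresponding kernel condition fails, so one must choose the decomposition, not merely find one.
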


Note that Theorem \ref{main}(i) shows that the conjecture holds within the class of simply connected elliptic surfaces.  
Even if Conjecture \ref{mainconj} is true, there might well exist symplectic forms on manifolds with $b^+>1$ which are not aperiodic; for instance even on Zehnder's original example of $T^4$ it is a major open question whether the standard symplectic form $dx^1\wedge dx^2+dx^3\wedge dx^4$ has finite Hofer--Zehnder capacity.  It is also conceivable that any \emph{rational} symplectic form (\emph{i.e.}, one whose cohomology class belongs to $H^2(X;\mathbb{Q})$) has finite Hofer--Zehnder capacity; as mentioned earlier the method used in this paper, while producing many forms with infinite Hofer--Zehnder capacity, never produces ones which are rational.

\subsection{Summary of the construction}
The main observation underlying this paper is that the Hamiltonian dynamics appearing in Zehnder's torus can be found in a very wide variety of symplectic manifolds.  Using a standard coisotropic neighborhood theorem, one can see that a symplectic $2n$-manifold $(X,\omega)$ will be aperiodic provided that there is a hypersurface $T\subset X$ diffeomorphic to the $(2n-1)$-torus such that $\omega|_T$ coincides with the restriction of Zehnder's symplectic form $\omega_Z$ to a hypersurface $\{x_{2n}=c\}$.  While it may seem difficult to find examples of such $T\subset X$, we construct many by the following somewhat indirect procedure.  Begin instead with a symplectic $2n$-manifold $(X,\tilde{\omega})$ and a $(2n-1)$-torus $T\subset X$ so that $\tilde{\omega}|_T$ is a $2$-form with constant, possibly rational, coefficients; as will be explained shortly, there are  rich sources of such examples.  
Then, as a form on $T$, $\tilde{\omega}|_T$ has small perturbations $\tilde{\omega}|_T+\ep\eta_T$ which have irrational coefficients and have kernels having no periodic orbits.  Such small perturbations can be extended to forms $\tilde{\omega}+\ep\eta$ on all of $X$ provided that their de Rham cohomology classes lie in the image of the inclusion-induced map $H^2(X;\R)\to H^2(T;\R)$; in particular some of these perturbations will  extend if the rank of $H^2(X;\R)\to H^2(T;\R)$ is two or more.  In this case, at least if $X$ is compact, the form $\omega=\tilde{\omega}+\ep\eta$ will be symplectic for small $\ep$, and will be aperiodic because it restricts to an appropriate constant-coefficient irrational form on $T$.  

We mentioned above that there are many examples of symplectic $(X,\tilde{\omega})$ containing torus hypersurfaces $T$ with $\tilde{\omega}|_T$ a constant-coefficient form; here is how we produce these.  More specifically, we will obtain forms $\tilde{\omega}$ on $X$ such that, where $V=T^{2n-2}$ and $p\co T\to V$ is the projection which removes the last coordinate, we have $\tilde{\omega}|_T=p^*\omega_V$ for some constant-coefficient form $\omega_V$ on $V$.  (Below we call such a hypersurface $T\subset X$ ``$\omega_V$-embedded.''  In particular the characteristic foliation of $\tilde{\omega}$ on $T$ is given by the fibers of the fibration and so all of its leaves are closed, whether or not $\omega_V$ has irrational coefficients.)  This situation is produced whenever one performs a \emph{symplectic sum} (\cite{Gom},\cite{MW}) on a (possibly disconnected) symplectic manifold $(M,\omega')$ equipped with embeddings of symplectic $(2n-2)$-tori $i_{\pm}\co V\to M$ with $i_{\pm}^{*}\omega'=\omega_V$ where the $i_{\pm}(V)$ have trivial normal bundle.  In fact, there is a converse statement: whenever one has an $\omega_V$-embedded hypersurface $T\subset X$ one can form (following \cite{Le}, though with somewhat different conventions) a symplectic manifold $(M,\omega')$ called the symplectic cut of $X$ along $T$, and then $X$ and $T$ can be recovered (up to $T$-preserving symplectomorphism) by suitably applying the symplectic sum to $M$.

Thus a basic strategy for producing aperiodic symplectic manifolds $(X,\omega)$ can be described as follows: start with a symplectic manifold $(M,\omega')$ containing disjoint codimension-two symplectic embeddings of tori $i_{\pm}\co V\to M$ so that $i_{\pm}^{*}\omega'=\omega_V$ for a constant-coefficient symplectic form $\omega_V$.  Then apply the symplectic sum, yielding a symplectic manifold $(X,\tilde{\omega})$ with an $\omega_V$-embedded torus hypersurface $T$.  While the characteristic foliation on $T$ given by $\tilde{\omega}$ will consist entirely of closed leaves, if the map $H^2(X;\R)\to H^2(T;\R)$ has rank at least two then $\tilde{\omega}$ will admit symplectic perturbations $\omega$ which induce on $T$ the dynamics of Zehnder's torus; in particular $\omega$ will be aperiodic.  

There thus remains the matter of determining when the inclusion-induced map $i^*\co H^2(X;\R)\to H^2(T;\R)$ has rank at least two, which in the present case is equivalent to $p_{!}\circ i^*\co H^2(X;\R)\to H^1(V;\R)$ having rank at least one where $p_!\co  H^2(T;\R)\to H^1(V;\R)$ is the Gysin map.     Under certain modest hypotheses, Lemma \ref{maincalc} calculates the image of $p_!\circ i^*$ (for a general symplectic sum, not necessarily along tori with trivial normal bundle) entirely in terms of data associated to the symplectic embeddings $i_{\pm}\co V\to M$.  So when this calculation yields a nontrivial result, we obtain aperiodic symplectic forms $\omega$ on the manifold $X$ given by the symplectic sum construction.

While all of the above is valid in arbitrary dimensions, in practice it is easiest to find interesting examples in dimension four, which accounts for the four-dimensional focus of Theorem \ref{main}.  For one thing, it is easier to find codimension-two symplectic tori along which to perform the symplectic sum when the ambient dimension is four: the tori need to be symplectomorphic, but since the genus and the area provide complete invariants of symplectic two-manifolds codimension-two tori in a symplectic four-manifold will be symplectomorphic as soon as they have the same area.  In the case where the manifold $M$ is disconnected and the tori lie on different connected components, even this area constraint is not really significant, since the tori will become symplectomorphic after the symplectic form on one of the connected components of $M$ is rescaled.  Also, in dimension four the criterion provided by Lemma \ref{maincalc} is more easily satisfied.  In particular, when the tori $i_{\pm}(V)$ have self-intersection zero and lie on different connected components of $M$, Lemmas \ref{maincalc} and \ref{irrat} show that we can obtain aperiodic symplectic forms on the sum $X$ provided that the kernels of the inclusion induced maps $(i_{\pm})_*\co H_1(V;\R)\to H_1(M;\R)$ have nontrivial intersection.  Not coincidentally in light of Theorem \ref{llthm},  one can check that this latter condition also ensures that the sum $X$ will have $b^+>1$ (in particular, while Theorem \ref{main} (iii) includes many manifolds from \cite{ABBKP}, there are also some manifolds in \cite{ABBKP} that are formed by symplectic sums along tori and have $b^+=1$ to which our method does not apply because this condition on the kernels of $(i_{\pm})_*\co H_1(V;\R)\to H_1(M;\R)$ does not hold).

In dimension four,  one can consider more general symplectic sums along tori in $M$, where $i_{\pm}(V)$ have opposite, nonzero self-intersection $\pm k$.  This produces a hypersurface $Y\subset X$ where $X$ is the symplectic sum, with $Y$ diffeomorphic to the principal $S^1$-bundle over $T^2$ with Euler number $k$.  Lemma \ref{irrat} produces aperiodic symplectic forms on $X$ (with the characterstic foliation on $Y$ having no closed leaves) provided that the map $p_!\circ i^*\co H^2(X;\R)\to H^1(V;\R)$ is surjective.  If the tori lie on different connected components of $M$, Lemma \ref{maincalc} shows that this condition holds if the maps $(i_{\pm})_*\co H_1(V;\R)\to H_1(M;\R)$ both vanish.

Of course, whenever one finds a hypersurface $Y\subset X$ which violates the nearby existence property, it is also true that, for any closed symplectic manifold $P$, the hypersurface $Y\times P\subset X\times P$ violates the nearby existence property.  Thus if $(X,\omega)$ is aperiodic then so is $(X\times P,\omega\oplus \sigma)$ for any closed symplectic manifold $(P,\sigma)$.  Thus our diverse examples of four-dimensional aperiodic symplectic manifolds give rise to many examples of aperiodic symplectic manifolds in arbitrary even dimension larger than four.  Applying this to the manifolds $E(n)_K$ of Theorem \ref{main}(i), as discussed in Remark \ref{prodrem} this produces infinite families of mutually diffeomorphic but non-deformation-equivalent symplectic six-manifolds all admitting aperiodic symplectic forms. 

Finally, we mention that our main construction is not the only one capable of producing aperiodic symplectic forms.  Here is another one, a special case of which appears at the end of the paper in Section \ref{nontrivy}.  Suppose that $(U,\sigma)$ is a closed symplectic manifold and $\phi\co U\to U$ is a symplectomorphism having no periodic points.  Then where $T_{\phi}$ is the mapping torus of $\phi$, there is a natural symplectic form on $S^1\times T_{\phi}$ with respect to which each of the hypersurfaces $\{s\}\times T_{\phi}$ violates the nearby existence property.  Thus symplectomorphisms without periodic points give rise to aperiodic symplectic forms two dimensions higher.  In particular, the Kodaira--Thurston manifold can be obtained by this construction, and we obtain symplectic forms on the Kodaira--Thurston manifold admitting hypersurfaces diffeomorphic to the Heisenberg manifold (\emph{i.e.}, the principal $S^1$-bundle over $T^2$ with Euler number $\pm 1$, depending on orientation) which violate the nearby existence property.  While our main construction does in certain cases give hypersurfaces diffeomorphic to the Heisenberg manifold which violate the nearby existence property, we prove in Section \ref{nontrivy} that this particular case in the Kodaira--Thurston manifold cannot be obtained in this fashion.  

There are, incidentally, aperiodic symplectic forms on the Kodaira--Thurston manifold that are produced by our main construction; indeed this is a special case of Theorem \ref{main}(iv).  However in these cases the hypersurface violating the nearby existence property is a $3$-torus, not the Heisenberg manifold.

\subsection{Outline of the paper}

Section \ref{pert} contains the main dynamical input to the construction, Lemma \ref{irrat}, stating that if a closed symplectic manifold $(X,\omega)$ contains a hypersurface $Y$ diffeomorphic to a principal bundle over a torus with $\omega|_Y$ suitably compatible with the bundle projection, then under a topological hypothesis on the map $H^2(X;\R)\to H^2(Y;\R)$ the symplectic form $\omega$ will have perturbations with respect to which $Y$ violates the nearby existence property.

Section \ref{surg} contains a review of two symplectic surgery operations, the symplectic cut and the symplectic sum.  When $Y\subset X$ is a hypersurface as in the previous paragraph, one can form a new symplectic manifold $(M,\omega')$ containing distinguished codimension-two symplectic submanifolds by cutting $X$ along $Y$ and then collapsing the circle fibers of the resulting boundary components.  Conversely, starting with two suitably compatible codimension-two symplectic submanifolds of a (perhaps disconnected) symplectic manifold, one can form the symplectic sum; topologically this operation amounts to removing neighborhoods of the submanifolds and then gluing the resulting boundary components.  We describe these constructions in detail, in order to draw the conclusion that they are inverses to each other up to symplectomorphisms \emph{which preserve the additional data}, including the hypersurface $Y$; this is a stronger inversion statement than seems to appear in the literature, though it should not surprise experts.  

Section \ref{calc} connects the previous two; while Lemma \ref{irrat} gives a topological condition on $Y\subset X$ which suffices for the existence of aperiodic symplectic forms on $X$, the main result of Section \ref{calc}, Lemma \ref{maincalc}, allows one to check this topological condition on $X$ in terms of topological data in the manifold $M$ formed by applying the symplectic cut to $X$ along $Y$.  Thus, conversely, if we form $X$ by applying the symplectic sum to a manifold $M$ on which we have arranged the topological condition to hold, then $X$ will admit aperiodic symplectic forms.  This is the principle behind most of our examples.

With this preparation, in Section \ref{ex} we are able to provide the examples yielding Theorem \ref{main}, in each case recalling the relevant manifolds from the literature and verifying that they can be formed by a symplectic sum of the sort that satisfies the topological conditions established in Lemmas \ref{irrat} and \ref{maincalc}.

Finally, the Appendix contains the promised proof of Theorem \ref{llthm}.

\subsection*{Acknowledgements} I thank Viktor Ginzburg for his interest in and comments on this work.

\section{Perturbing to an aperiodic form}\label{pert}

In this paper we identify the $S^1$ with the unit circle in the complex plane.  When we instead wish the circle to have length one we will denote it by $\mathbb{R}/\mathbb{Z}$.

\begin{definition} Let $(V,\omega_V)$ be a symplectic $(2n-2)$-manifold, $p\co Y\to V$ be a principal $S^1$-bundle over $V$, and let $i_Y\co Y\hookrightarrow X$ be an embedding of $Y$ into a symplectic $2n$-manifold $(X,\omega)$.  We say that $Y\subset X$ is \emph{$\omega_V$-embedded} in $X$ if we have $i_{Y}^{*}\omega=p^*\omega_V$ for some volume form $\eta$ on $T$.
\end{definition}

Recall that for a principal $S^1$-bundle $p\co Y\to V$ we have a Gysin map $p_{!}\co H^{k}(Y;\mathbb{R})\to H^{k-1}(V;\mathbb{R})$ (given in terms of de Rham cohomology by integration down the fiber, or equivalently by transferring the map induced on homology by $p$ to cohomology via Poincar\'e duality on both $Y$ and $V$).  This map fits into an exact sequence (the ``Gysin sequence'')
\begin{equation}\label{gysin}  \xymatrix{  H^k(V;\mathbb{R})\ar[r]^{p^*} & H^k(Y;\mathbb{R})\ar[r]^{p_{!}} & H^{k-1}(V;\mathbb{R})\ar[r]^{e\cup\cdot} & H^{k+1}(V;\mathbb{R})}\end{equation} where $e\in H^2(V;\mathbb{R})$ is the Euler class of the bundle $p\co Y\to V$.

The following lemma shows that if $Y\subset X$ is $\omega_V$-embedded where $V$ is a torus, then in certain cases $X$ admits symplectic forms with respect to which $Y$ violates the nearby existence property provided that the image of the composition $p_!\circ i_{Y}^{*}\co H^2(X;\R)\to H^1(V;\R)$ is large enough.  Together with Lemma \ref{maincalc}, which calculates $Im(p_!\circ i_{Y}^{*})$ in some cases, this will serve as the basis for our examples in Theorem \ref{main}.

\begin{lemma}\label{irrat}  Suppose that $V=\{(x_1,\ldots,x_{2n-2})|x_i\in\mathbb{R}/\mathbb{Z}\}$ is a $(2n-2)$-dimensional torus with a linear symplectic form $\sum_{i,j=1}^{2n-2}B_{ij}dx^i\wedge dx^j$ (with the $B_{ij}$ constant) and that $p\co Y\to V$ is $\omega_V$-embedded in the closed symplectic $2n$-manifold $(X,\omega)$ via the map $i\co Y\to X$.  Suppose moreover that either \begin{itemize}\item[(i)] The bundle $p\co Y\to V$ is trivial and the composition $p_{!}\circ i^*\co H^2(X;\mathbb{R})\to H^2(Y;\mathbb{R})\to H^1(V;\mathbb{R})$ is nonzero, or \item[(ii)] $\dim V=2$ and the composition $p_{!}\circ i^*\co H^2(X;\mathbb{R})\to H^2(Y;\mathbb{R})\to H^1(V;\mathbb{R})$ is surjective.
\end{itemize}  Then there exists a smooth family of symplectic forms $\{\omega_u\}_{u\in (-\delta,\delta)}$ such that $Y$ violates the nearby existence property with respect to $\omega_u$ for all but countably many $u$.  
 (In fact, in the second case this holds for all nonzero $u$.)
\end{lemma}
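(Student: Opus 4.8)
The plan is to obtain the family by a cohomological perturbation. Fix a closed $2$-form $\eta$ on $X$, to be chosen using the hypothesis on $p_!\circ i^*$, and set $\omega_u=\omega+u\eta$. Since $X$ is closed and nondegeneracy is an open condition, $\omega_u$ is symplectic for $u$ in some interval $(-\delta,\delta)$, and the family depends smoothly on $u$. Everything then reduces to showing that, for a suitable $\eta$ and for all but countably many $u$ (all $u\neq 0$ in case (ii)), the induced presymplectic form $\omega_u|_Y=p^*\omega_V+u\,i^*\eta$ on $Y$ has corank exactly $1$ and its characteristic foliation has no closed leaves, and that this property persists on all the hypersurfaces of an appropriately chosen tubular neighborhood of $Y$.

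First I would arrange that $i^*\eta$ is translation-invariant on $Y$. Using the hypothesis, pick $\eta$ so that $p_!(i^*[\eta])$ equals a prescribed nonzero class $\bar\xi\in H^1(V;\mathbb{R})$: in case (i), any class not annihilated by $p_!\circ i^*$; in case (ii), using surjectivity, a class of irrational slope, i.e.\ represented by $c_1\,dx^1+c_2\,dx^2$ with $c_1/c_2$ irrational. Since $Y$ is a torus (case (i)) or a principal $S^1$-bundle over $T^2$, hence a nilmanifold (case (ii)), every de Rham class on $Y$ is represented by a translation-invariant form, so $i^*\eta=\eta^{\mathrm{inv}}+d\beta$ with $\eta^{\mathrm{inv}}$ invariant and $p_!\eta^{\mathrm{inv}}=\bar\xi$. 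Replacing $\eta$ by $\eta-d\gamma$, where $\gamma$ is a cutoff of the pullback of $\beta$ to a tubular neighborhood of $Y$ (this changes $\eta$ only away from $Y$ and only by an exact form, so it affects neither the conclusion nor symplecticity for small $u$), I may assume $i^*\eta=\eta^{\mathrm{inv}}$. Then $\omega_u|_Y$ is an invariant presymplectic form whose restriction to a complement of the $S^1$-fibre direction is a small perturbation of $\omega_V$, hence nondegenerate; so $\omega_u|_Y$ has corank exactly $1$ and its kernel is spanned by one invariant vector field $w_u$. The key point is that $dp(w_u)$ is a \emph{constant} vector field $Z_u$ on the torus $V$, and $p$ intertwines the flow of $w_u$ with that of $Z_u$; hence any closed orbit of $w_u$ on $Y$ projects to a closed orbit of $Z_u$ on $V$. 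It therefore suffices to arrange that the constant vector field $Z_u$ on $V=T^{2n-2}$ has no closed orbit, i.e.\ is not a real multiple of an integer vector.

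A direct linear-algebra computation identifies $Z_u$. In case (i), writing $Y=T^{2n-1}$ with the last coordinate the fibre direction and $\eta^{\mathrm{inv}}=\eta_0+\sum_{k\le 2n-2}a_k\,dx^k\wedge dx^{2n-1}$ with $\eta_0$ pulled back from $V$ and $\vec a=(a_k)\neq 0$ (this is $\bar\xi$ under $p_!$), one gets $Z_u=-u\,M(u)^{-1}\vec a$, where $M(u)$ is the skew matrix of $\omega_V+u\eta_0$. This is real-analytic in $u$ with $Z_0=0$ and $\left.\frac{d}{du}\right|_{u=0}Z_u=-M(0)^{-1}\vec a\neq 0$, so some component of $Z_u$ is a nonconstant real-analytic function of $u$; hence $\{u:Z_u\in\mathbb{Q}^{2n-2}\}$, which contains the set of $u$ for which $w_u$ (equivalently, the characteristic foliation of $\omega_u|_Y$) has a closed leaf, is countable. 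In case (ii) the computation gives $Z_u=-u\,X_{\bar\xi}$, where $X_{\bar\xi}$ is the constant vector field on $T^2$ with $\iota_{X_{\bar\xi}}\omega_V=\bar\xi$; since $\bar\xi$ has irrational slope so does $X_{\bar\xi}$, so $Z_u$ has no closed orbit for every $u\neq 0$. To pass from $Y$ to the nearby hypersurfaces I would invoke the coisotropic neighborhood theorem: for any $1$-form $\theta$ on $Y$ not vanishing on $\ker(\omega_u|_Y)$, a neighborhood of $Y$ in $(X,\omega_u)$ is symplectomorphic rel $Y$ to a neighborhood of $\{0\}\times Y$ in $\big((-\epsilon,\epsilon)\times Y,\ \pi^*(\omega_u|_Y)+d(s\theta)\big)$, where $\pi$ is the projection to $Y$ and $s$ the coordinate on $(-\epsilon,\epsilon)$. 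Take $\theta=dx^{2n-1}$ in case (i), so that $d\theta=0$; in case (ii) take $\theta$ a connection $1$-form for $p$ with $d\theta=p^*F$ for a constant-coefficient $2$-form $F$ on $T^2$. The slice $\{s=c\}$ then carries the invariant corank-$1$ presymplectic form $\omega_u|_Y+c\,d\theta$, and its characteristic direction projects under $p$ to a constant vector field on $V$ that is a positive multiple of $Z_u$ for $|c|$ small (in case (i), equal to $Z_u$; in case (ii), $\omega_V+cF$ is an area form proportional to $\omega_V$, so $Z_u$ is merely rescaled). A nonzero multiple of $Z_u$ has a closed orbit on $V$ only when $Z_u$ does, so every slice $\{s=c\}$ with $|c|$ small has characteristic foliation without closed leaves; carrying this tubular neighborhood back into $X$ exhibits $Y$ as violating the nearby existence property with respect to $\omega_u$ --- for all but countably many $u$ in case (i), and for every $u\neq 0$ in case (ii).

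The main obstacle, and the reason for this somewhat indirect route, is the passage from $Y$ to the genuinely nearby hypersurfaces. One cannot just perturb $\omega|_Y$ inside $Y$: if $\omega_u|_Y$ stays a pullback of a symplectic form from $V$ then its characteristic foliation is still the fibration $p$, all of whose leaves are closed, so the perturbation has to change $\omega|_Y$ in an essential way, and then the characteristic foliations of the nearby level sets a priori depend on the full $1$-jet of $\omega_u$ along $Y$. The neighborhood theorem tames this, but only because it lets us present the germ of $\omega_u$ near $Y$ using a $1$-form $\theta$ of our choosing; choosing $\theta$ flat (case (i)) or a connection form with invariant curvature (case (ii)) is what keeps the nearby slices explicit. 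A secondary subtlety is the nilmanifold case: there $w_u$ is not a constant vector field on a torus, so one cannot read off the absence of closed orbits directly, and it is the projection to $V=T^2$ --- where the dynamics becomes linear --- that makes transparent that there are no periodic orbits.
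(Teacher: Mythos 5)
Your proposal is correct and follows essentially the same route as the paper: perturb by $\omega_u=\omega+u\eta$ with $p_!\circ i^*$ of $[\eta]$ prescribed by the hypothesis, reduce to invariant (constant-coefficient) data on $Y$, and identify the characteristic flows of $Y$ and the nearby slices with linear flows on tori that are irrational for all but countably many $u$ (all $u\neq 0$ in case (ii)). The only organizational difference is that the paper fixes the coisotropic normal form of the original $\omega$ once and arranges, via a cut-off primitive, that $\eta$ restricts to the whole neck as the pullback of the invariant form $\alpha\wedge p^*\phi+p^*\beta$, whereas you normalize $i^*\eta$ only on $Y$ itself and then re-apply the Gotay neighborhood theorem to each perturbed form $\omega_u$ with a judiciously chosen $\theta$ (flat in case (i), constant-curvature connection in case (ii)) --- an equally valid way to control the nearby hypersurfaces, since violating nearby existence only requires some tubular neighborhood for each fixed $u$.
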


\begin{remark}
Here and at various other points in the paper the assumption that $X$ be closed can be dropped by working with cohomology with compact supports instead of ordinary cohomology; we leave it to the reader to develop this.
\end{remark}

\begin{proof} Let $\gamma=\sum_{i<j}C_{ij}dx^i\wedge dx^j\in \Omega^2(V)$ be a constant coefficient $2$-form representing $-e\in H^2(V;\mathbb{R})$ where $e$ is the Euler class.  Chern-Weil theory shows that we can choose $\alpha\in \Omega^1(P)$ such that $2\pi\sqrt{-1}\alpha\in \Omega(Y;\sqrt{-1}\mathbb{R})$ is a connection form on $Y$ and such that $d\alpha=p^*\gamma$.  In particular, if $v$ is the period-$1$ vector field on $P$ that generates the $S^1$-action we have $\alpha(v)=1$.    

Since $Y\subset X$ is a coisotropic submanifold whose characteristic foliation is generated by $v$  (by the assumption that $Y$ is $\omega_V$-embedded), it follows from a standard local uniqueness theorem for coisotropic submanifolds \cite{Got} \cite{Ma} that a neighborhood $U$ of $Y$ in $X$ is symplectomorphic to $(-\ep_0,\ep_0)\times Y$ with the symplectic form \[ \omega_0=p^*\omega_V+d(s\alpha),\] where $s$ is the coordinate on $(-\ep_0,\ep_0)$.  Here and below we identify $U$ with $(-\ep_0,\ep_0)\times Y$, and we identify forms on $Y$ with their pullbacks to $U$ via the projection.

Let $\phi\in \Omega^1(V)$ be a de Rham representative with constant coefficients of any class in the image of $p_{!}\co H^2(Y;\mathbb{R})\to H^1(V;\mathbb{R})$.  Then $[\phi]\cup e=0$ by the Gysin sequence, so that $\gamma\wedge\phi=0$ since both $\phi$ and $\gamma$ have constant coefficients and their product is cohomologically trivial.  Consequently  \[ d(\alpha\wedge p^*\phi)=(d\alpha)\wedge p^*\phi =p^*(\gamma\wedge\phi)=0.\]  Thus $\alpha\wedge p^*\phi$ is a de Rham representative of a class in $H^2(Y;\mathbb{R})$; clearly we have $p_{!}(\alpha\wedge p^*\phi)=[\phi]\in H^1(V;\mathbb{R})$.  Moreover, the Gysin sequence shows 
that any class $x\in H^2(Y;\mathbb{R})$ with $p_{!}x=[\phi]$ has representatives of the form $\alpha\wedge p^*\phi+p^*\beta$, where $\beta\in \Omega^2(V)$ has constant coefficients.

Now suppose that $\phi\in \Omega^1(V)$ is a constant-coefficient $1$-form whose cohomology class lies in the image of $p_{!}\circ i^*\co H^2(X;\mathbb{R})\to H^1(V;\R)$.  Thus, for some constant-coefficient $2$-form $\beta\in \Omega^2(V)$, the cohomology class of the $2$-form \[ \eta_0=\alpha\wedge p^*\phi+ p^*\beta \] lies in the image of $i^*\co H^2(X;\mathbb{R})\to H^2(Y;\mathbb{R})$.  So there is $\zeta\in \Omega^2(X)$ and $\theta\in \Omega^1(U)$ such that $\eta_0=\zeta|_U+d\theta$.  Let $\chi\co (-\ep_0,\ep_0)\to\mathbb{R}$ be a compactly supported smooth function equal to $1$ on $(-\ep,\ep)$ for some $\ep<\ep_0$.  Then the two-form \[ \eta=\zeta+d(\chi(s)\theta) \] is a closed $2$-form on $X$ which restricts to $(-\ep,\ep)\times Y$ as (the pullback of) $\eta_0$.

Now, for $u\in\mathbb{R}$, let $\omega_u=\omega+u\eta$.  This form is closed, and since $X$ is compact and nondegeneracy is an open condition it is symplectic for sufficiently small $|u|$.  Restricting to $(-\ep,\ep)\times Y$, we have \begin{align*} \omega_u&=p^*\omega_V+d(s\alpha)+u\eta_0=p^*\omega_V+u\alpha\wedge p^*\phi+up^*\beta+sp^*\gamma+ds\wedge \alpha.
\end{align*}
Thus \[ \omega_u|_{\{s\}\times Y}=p^*(\omega_V+u\beta+s\gamma)+u\alpha\wedge p^*\phi.\] (In the case where $p\co Y\to V$ is trivial, we have $\gamma=0$.)

Now we can write $\omega_V+u\beta+s\gamma=p^*\left(\frac{1}{2}\sum_{i,j}N(u,s)dx^i\wedge dx^j\right)$ for some invertible, antisymmetric matrix $N(u,s)$ depending in affine fashion on $u$ and $s$ (and independent of $s$ in the case that $p\co Y\to V$ is trivial), and we can write $\phi=\sum \phi_idx^i$ for some constant vector $\vec{\phi}=(\phi_1,\ldots,\phi_{2n-2})$.  Associate to any $\vec{c}=(c_1,\ldots,c_{2n-2})$ the vector field $y_{\vec{c}}$ on $Y$ which is horizontal with respect to the connection (\emph{i.e.}, $\alpha(y_{\vec{c}})=0$) and projects to $V$ as the constant-coefficient vector field $\sum_{i=1}^{2n-2}c_i\partial_{x_i}$, and as before let $v$ be the vector field on $Y$ which generates the $S^1$ action and has period $1$.  Then \[ \iota_{y_{\vec{c}}}(\omega_u|_{\{s\}\times Y})=-p^*\left(\sum_{i=1}^{2n-2}(N(u,s)\vec{c})_{i}dx^i\right)-\left(\sum_{i=1}^{2n-2}u\phi_ic_i\right)\alpha\] and \[ \iota_{v}(\omega_u|_{\{s\}\times Y})=up^*\left(\sum_{i=1}^{2n-2}\phi_idx^i\right).\]  
Consequently the kernel of $\omega_u|_{\{s\}\times Y}$ is generated by $y_{\vec{c}}+v$ where $\vec{c}$ is the solution to $N(u,s)\vec{c}=u\vec{\phi}$.  (Since $N(u,s)$ is antisymmetric, this solution $\vec{c}$ is orthogonal to $\vec{\phi}$).

In the case that $p\co Y\to V$ is trivial, $v$ is just the coordinate vector field on the fiber (identified with $\mathbb{R}/\mathbb{Z}$), and so a vector field of form $y_{\vec{c}}+v$ will have no periodic orbits as long as one of the components of $\vec{c}$ is irrational.  In our case, we have $\vec{c}=uN(u)^{-1}\vec{\phi}$ (as mentioned earlier, $N$ is independent of $s$ when the bundle is trivial); as long as $\vec{\phi}\neq 0$ (which can be arranged by the assumption that $p_{!}\circ i^*$ is nontrivial) this vector $\vec{c}$ will have an irrational component for all but countably many values of $u$ near zero.  This proves the result in the case that $p\co Y\to V$ is trivial.

If instead $V$ is two-dimensional, then a vector field of the form $y_{\vec{c}}+v$ will have no periodic orbits provided that the two components of $\vec{c}$ are rationally independent (since flowlines of the vector field will project to flowlines of $\sum c_i\partial_{x_i}$ on the torus $V$).  Moreover, since $V$ is two-dimensional (so that up to scaling there is only one constant-coefficient two-form on $V$), the matrix $N(u,s)$ can be taken to have the form $n(u,s)J$ where $J=\left(\begin{array}{cc}0 &-1 \\1& 0\end{array}\right)$ for some function $n$ valued in the positive reals.  Thus for any given $u$ we have $\vec{c}=-\frac{u}{n(u,s)}J\vec{\phi}$.  Now the assumption that $p_{!}\circ i^*$ is surjective implies that we can take $\vec{\phi}$ to have rationally independent components, and in this case the vector $\vec{c}$ will, for every value of $s$, have rationally independent components for all nonzero values of $u$. 
\end{proof}

\section{Symplectic surgery}\label{surg}

Fix throughout this section any closed connected symplectic $(2n-2)$-dimensional manifold $(V,\omega_V)$.  We interpret here the symplectic cut (adapted from \cite{Le}) and the symplectic sum (introduced in \cite{Gom},\cite{MW}) as operations which relate the following two types of data: \begin{itemize}
\item[(A)] A symplectic $2n$-manifold $(X,\omega)$, a principal $S^1$-bundle $p\co Y\to V$ and an $\omega_V$-embedding $i\co Y\to X$ (thus $p^*\omega_V=i^*\omega$).
\item[(B)] A symplectic $2n$-manifold $(M,\omega')$, embeddings $i_+,i_-\co V\to M$ having disjoint images with $i_{\pm}^{*}\omega'=\omega_V$, and an isotopy class of orientation-reversing isomorphisms $\Phi\co i_{+}^{*}(Ti_+(V))^{\omega'}\to i_{-}^{*}(Ti_-(V))^{\omega'}$ of the pullbacks of the symplectic normal bundles to the $i_{\pm}(V)$ which covers the identity on $V$.\end{itemize}

More specifically, the symplectic cut takes as its input data of type (A) above and produces an output (``the cut of $X$ along $Y$'') of type (B), whereas the symplectic sum acts in the opposite direction.  As will follow from the discussion below, these operations are inverses to each other up to the obvious notion of isomorphism (an isomorphism being a symplectomorphism of the ambient manifolds which  intertwines the appropriate auxiliary data).  

These operations do not generally preserve the number of connected components of the manifold; rather, the cut of $X$ along $Y$ will have as many connected components as has $X\setminus Y$.  In our examples the manifold $M$ will often decompose into connected components as $M=M_-\coprod M_+$ with $i_-(V)\subset M_-$ and $i_+(V)\subset M_+$.  In this case the symplectic sum $X$ will sometimes be referred to as ``the symplectic sum of $M_+$ and $M_-$ along $i_+(V)$ and $i_-(V)$.''  (As indicated above, this is a slight abuse of terminology, as strictly speaking one needs more data that just $M$ and the submanifolds $i_{\pm}(V)$ to specify the sum.)

\subsection{Cuts} \label{cutsect}
We begin by describing the symplectic cut.  Let $Y\subset X$ be an $\omega_V$-embedded principal $S^1$-bundle over $V$.  Choose a $1$-form $\alpha\in\Omega^1(Y)$ so that $2\pi\sqrt{-1}\alpha$ is a connection form; thus $\alpha(v)=1$ where $v$ is the period-$1$ vector field generating the $S^1$ action.  As noted in the proof of Lemma \ref{irrat}, a neighborhood $(U,\omega|_U)$ of $Y$ in $X$ may be identified with $(-\ep,\ep)\times Y$ with the symplectic form $p^*\omega_V+d(s\alpha)$ where $s$ is the coordinate on $(-\ep,\ep)$.  Equip $U\times \mathbb{C}$ with the standard symplectic form $\tilde{\omega}=\omega|_U+\frac{\sqrt{-1}}{2}dz\wedge d\bar{z}$, and define \begin{align*} H_{\pm}\co U\times \mathbb{C}&\to \mathbb{R} \\ (s,y,z)&\mapsto s\mp\pi|z|^2.\end{align*} 
Using the convention that the Hamiltonian vector field $X_{H_{\pm}}$ is given by $\iota_{X_{H_{\pm}}}\tilde{\omega}=-dH_{\pm}$, the Hamiltonian flows of $H_{\pm}$ are given by \[ \phi_{H_{\pm}}^{t}(s,y,z)=(s,e^{2\pi it}y,e^{\mp 2\pi it}z). \]  In particular $H_+$ and $H_-$ both generate $(\mathbb{R}/\mathbb{Z})$-actions.

Let $(N_{\pm},\bar{\omega}_{\pm})$ be the reduced spaces associated to these actions at the value zero. Thus \[ N_{\pm}=\frac{H_{\pm}^{-1}(0)}{\mathbb{R}/\mathbb{Z}}\] and the symplectic form $\bar{\omega}_{\pm}$ is characterized by the fact that its pullback to $H_{\pm}^{-1}(0)$ under the quotient map coincides with the restriction of $\tilde{\omega}$ to $H_{\pm}^{-1}(0)$.

Now we have \[ H_{+}^{-1}(0)=\left\{(s,y,z)\left|s=\pi|z|^2\right.\right\};\] consequently $N_+=N_{+}^{0}\cup N_>$ where $N_{+}^{0}$ is the quotient of the subset of $H_{+}^{-1}(0)$ where $s=z=0$ and $N_>$ is the quotient of the subset where $s>0$.  Moreover the map $\phi_+\co [s,y,\sqrt{s/\pi}e^{i\theta}]\mapsto (s,e^{i\theta}y)$ defines a symplectomorphism from $N_>$ to $(0,\ep)\times Y$. Meanwhile the restriction of the projection $H_{+}^{-1}(0)\to N_+$ over $N_{+}^{0}$ is just the quotient map $p$ associated to the $S^1$-action on $Y$ (with quotient space $V$); thus we have an embedding $i_+\co V\to N_+$ with image $N_{+}^{0}$.  Using the fact that $\tilde{\omega}|_{\{0\}\times Y}=p^*\omega_V$ it's easy to see that $i_{+}^{*}\bar{\omega}_+=\omega_V$.

Similar statements apply to $N_-$: we have $N_-=N_{-}^{0}\cup N_{<}$ where $N_{<}$ is symplectomorphic to $(-\ep,0)\times Y$ via the map $\phi_-\co [s,y,\sqrt{-s/\pi}e^{i\theta}]\mapsto (s,e^{-i\theta}y)$, and where $N_{-}^{0}$ is the image of an embedding $i_-\co V\to N_-$ with $i_{-}^{*}\bar{\omega}_-=\omega_V$.

We can then form the symplectic cut $(M,\omega')$ by gluing $(X\setminus (\{0\}\times Y),\omega)$ along $(0,\ep)\times Y$ to $N_+$ via the diffeomorphism $\phi_+$, and along $(-\ep,0)\times Y$ to $N_-$ via the diffeomorphism $\phi_-$.  Since the $\phi_{\pm}$ are symplectomorphisms to their images, the symplectic forms $\omega,\bar{\omega}_+$, and $\bar{\omega}_-$ piece together to give a symplectic form $\omega'$ on $M$; the embeddings $i_{\pm}\co V\to N_{\pm}$ survive as embeddings into $M$ with $i_{\pm}^{*}\omega'=i_{\pm}^{*}\bar{\omega}_{\pm}=\omega_V$.

It remains to define an orientation-reversing isomorphism of the symplectic normal bundles to $i_{\pm}(V)$ in $M$.  Note that we have tubular neighborhood maps $N_{\pm}\to V$ sending $[s,y,z]\mapsto p(y)\in V$, canonically identifying neighborhoods $N_{\pm}$ of $i_{\pm}(V)$ with disc bundles over $i_{\pm}(V)$, in a way which is easily seen to be compatible with the orientations on the symplectic normal bundles.  Now the map $N_+\to N_-$ defined by $[s,y,z]\mapsto [-s,y,\bar{z}]$ covers the identity on $V$ and acts orientation-reversingly on the disc fibers of the maps $N_{\pm}\to V$; hence linearizing at $i_{\pm}(V)$ and projecting gives the required orientation-reversing isomorphism of symplectic normal bundles.

\subsection{Symplectic structures on Hermitian bundles}\label{bundlestruct}

If $\pi\co E\to V$ is a complex vector bundle with a Hermitian metric $\langle\cdot,\cdot\rangle_E$ and if $A$ is a unitary connection on $E$, we can define a symplectic form on a neighborhood of the zero section in the total space of $E$ as follows.  Choose a unitary connection on $E$, determining a splitting $TE\cong T^{hor}E\oplus T^{vt}E$.  Define $\sigma\co E\to\mathbb{R}$ by $\sigma(e)=-\frac{1}{4}\langle e,e\rangle_E$, and $\theta_E\in\Omega^1(E)$ by $\theta_E(v)=d\sigma(\sqrt{-1}v^{vt})$, where $v^{vt}$ is the vertical component of $v\in TE$ with respect to the connection.  Now define $\omega_E=\pi^*\omega_V+d\theta_E$.  This form is symplectic on a neighborhood of the zero section; restricts to the zero section as $\omega_V$; and restricts to the fibers of $E\to V$ as the standard symplectic form arising from the imaginary part of the Hermitian metric.

Let us connect this back to the symplectic cutting construction. As noted in the last paragraph of Section \ref{cutsect} we have a disc bundle map $N_+\to V$ defined by $[s,y,z]\mapsto p(y)$.  The subset $N_+\subset M$ can be identified as a neighborhood of the zero section in the complex line bundle $L\to V$ associated to the principal $S^1$-bundle $p\co Y\to V$ via the standard multiplicative action of $S^1$ on $\mathbb{C}$.  The standard Hermitian inner product on $\mathbb{C}$ induces a Hermitian metric on $L$, and the connection form that was chosen on $Y\to V$ in the symplectic cut construction induces a unitary connection on $L$.  It is not difficult to verify that the symplectic form induced on $N_+$ via the symplectic cut construction coincides with the symplectic form constructed from the metric and connection in the previous paragraph.

\subsection{Sums}

We now describe the symplectic sum; there are various formulations in the literature of which ours is most similar to \cite[Section 2]{IP}; in particular a reader familiar with Ionel-Parker's construction should be able to prove it to be symplectomorphic to ours without much difficulty.  Our description is designed to make the relationship of the symplectic sum with the symplectic cut as transparent as possible, and in particular to clarify that the result of the symplectic sum includes a distinguished $\omega_V$-embedded hypersurface.  A description rather similar to ours is also briefly sketched in \cite{MSy}.

We take as input a symplectic manifold $(M,\omega')$, embeddings $i_{\pm}\co V\to M$ with disjoint images and with $i_{\pm}^{*}\omega'=\omega_V$, and an orientation-reversing bundle isomorphism $\Phi\co i_{+}^{*}(Ti_+(V))^{\omega'}\to i_{-}^{*}(Ti_-(V))^{\omega'}$ covering the identity on $V$.  Write $L_+=i_{+}^{*}(Ti_+(V))^{\omega'}$.  Endow $L_+$ with an orientation-compatible complex structure, a Hermitian metric $\langle\cdot,\cdot\rangle_{L_+}$ with associated norm $|\cdot|_{L_+}$, and a unitary connection.  Then the Weinstein neighborhood theorem shows that, for some $\ep>0$, the disc bundle $L_+(\ep)=\{x\in L_+|\,|x|_{L_+}< \ep\}$ in $L_+$, equipped with the symplectic form $\omega_{L_+}=\pi_{L_+}^{*}\omega_V+d\theta_L$ of Section \ref{bundlestruct}, is symplectomorphic to a neighborhood $\mathcal{N}_+$ of $i_+(V)$ in $(M,\omega')$, via a map $\Psi_+\co L_+(\ep)\to \mathcal{N}_+$ restricting as $i_+$ on the zero-section $V$.  We take the neighborhood $\mathcal{N}_+$ to be disjoint from $i_-(V)$.

Where $Y\subset L_+$ consists of those $x\in L_+$ with $|x|_{L_+}=1$, the multiplicative action of $S^1\subset \mathbb{C}$ gives $p=\pi_{L_+}|_Y\co Y\to V$ the structure of a principal $S^1$-bundle.  The unitary connection determines a $1$-form $\alpha\in \Omega^1(V)$ characterized by the properties that it restricts as zero to the horizontal subspace of $TY\subset TL|_Y$ and gives the value $1$ to the vector field $v$ that generates the $S^1$-action and has period $1$.  Let $L_{+}^{0}$ denote the complement of the zero section in $L_+$ and let $r\co L_+\to Y$ denote the fiberwise retraction $x\mapsto \frac{x}{|x|_{L_+}}$.  Then, where we define $\rho\co L_+\to\mathbb{R}$ by  $\rho(x)=\pi|x|_{L_+}^{2}$, an easy computation shows \[ \omega_{L_+}|_{L_{+}^{0}}=\pi_{L_+}^{*}\omega_V+d(\rho r^*\alpha)=r^*p^*\omega_V+d(\rho r^*\alpha).\]  Thus the map $f_+\co x\mapsto (\pi|x|_{L_+}^{2},r(x))$ defines a symplectomorphism from $L_{+}^{0}(\ep)=\{x\in L_+|0<|x|_{L_+}< \ep\}$ to its image $(0,\pi\ep^2)\times Y$, where the latter is equipped with the symplectic form $p^*\omega_V+d(s\alpha)$ with $s$ being the coordinate on the interval $(0,\pi\ep^2)$.

 Write $L_-=i_{-}^{*}(Ti_-(V))^{\omega'}$; thus we have an orientation-reversing bundle isomorphism $\Phi\co L_+\to L_-$ such that $\pi_{L_-}\circ\Phi=\pi_{L_+}$.  Endow $L_-$ with the unique complex structure which makes $\Phi$ conjugate-linear (thus this complex structure is compatible with the orientation on $L_-$), and with the Hermitian metric $\langle\cdot,\cdot\rangle_{L_-}$ given by \begin{equation} \label{phiherm}\langle\Phi v,\Phi w\rangle_{L_-}=\langle w,v\rangle_{L_+}.\end{equation}    Pushing forward the horizontal subspaces of $TL_+$ via $\Phi$ gives a connection on $L_-$, which is unitary with respect to $\langle \cdot,\cdot\rangle_{L_-}$.  These data allow us to construct a symplectic form $\omega_{L_-}=\pi_{L_-}^{*}\omega+d\theta_{L_-}$ on a neighborhood of the zero section in $L_-$ via the procedure of Section \ref{bundlestruct}.  The Weinstein neighborhood theorem shows that, for some $\delta>0$, the $\delta$-disc bundle $L_-(\delta)$ in $L_-$ is symplectomorphic to a neighborhood of $\mathcal{N}_-$ of $i_-(V)$ in $(M,\omega')$ via a map $\Psi_-\co L_-(\delta)\to \mathcal{N}_-$ restricting as $i_-$ on $V$.  Here we choose $\mathcal{N}_-$ so that $\mathcal{N}_-\cap \mathcal{N}_+=\varnothing$.
 
Now by (\ref{phiherm}), the image $\Phi(Y)$ will be the unit circle bundle in $L_-$.  Then as usual one obtains a $1$-form $\alpha_-\in \Omega^1(Y)$ by requiring $\alpha_-$ to vanish on horizontal vectors and evaluate as $1$ on the period-$1$ generator of the $S^1$-action.  As before, where $\rho_-\co L_-\to \mathbb{R}$ is defined by $\rho_-(y)=\pi|y|_{L_-}^{2}$, and where (for $L_{-}^{0}$ equal to the complement of the zero-section) $r_-\co L_{-}^{0}\to \Phi(Y)$ is defined by $r_-(y)=\frac{y}{|y|_{L_-}}$, we have \[ (\omega_{L_-})|_{L_{-}^{0}}=\pi_{L_-}^{*}\omega_V+d(\rho_-r_{-}^{*}\alpha_-).\]  Now clearly $r_-\circ \Phi=r$, and $\rho_-\circ\Phi=\rho$; meanwhile since $\Phi$ is conjugate-linear we will have $\Phi^*\alpha_-=-\alpha$.  So since $\Phi^*\pi_{L_-}^{*}\omega_V=\pi_{L_+}^{*}\phi^*\omega_V=\pi_{L_+}^{*}\omega_V$ we get \[ \Phi^*\left((\omega_{L_-})|_{L_{-}^{0}}\right)=\pi_{L_+}^{*}\omega_V-d(\rho r^*\alpha).\]
Consequently the map $f_-\co y\mapsto \left(-\pi|y|_{L_-}^{2},r(\Phi^{-1}(y))\right)$ is a symplectomorphism from the complement  $L_{-}^{0}(\delta)$ of the zero section in the $\delta$-disc bundle $L_{-}(\delta)$ to its image $(-\pi\delta^2,0)\times Y $ endowed with the symplectic form $p^*\omega_V+d(s\alpha)$.

We now define the symplectic sum $(X,\omega)$.  It is the union of the symplectic manifold $(M\setminus (i_-(V)\cup i_+(V)),\omega')$ with the symplectic manifold $((-\pi\delta^2,\pi\ep^2)\times Y,p^*\omega_V+d(s\alpha))$, where $(-\pi\delta^2,0)\times Y$ is glued to the Weinstein neighborhood $\mathcal{N}_-\setminus i_-(V)$ by the symplectomorphism $\Psi_-\circ f_{-}^{-1}$, and $(0,\pi\ep^2)\times Y$ is glued to the Weinstein neighborhood $\mathcal{N}_+\setminus i_+(V)$ by the symplectomorphism $\Psi_{+}\circ f_{+}^{-1}$.  In $(X,\omega)$ we see a distinguished hypersurface $Y\cong \{0\}\times Y$; this hypersurface carries the structure of a principal $S^1$-bundle, and evidently $\omega|_Y=p^*\omega_V$.

It is basically immediate from the descriptions that we have given that the cutting and summing constructions are inverse to each other up to symplectomorphisms that preserve all of the original data.  Indeed, if we start with $(M,\omega')$ together with $i_{\pm}\co V\to M$ and $\Phi\co L_+\to L_-$ and then apply the sum, as just noted we obtain a hypersurface $Y\subset X$ with a neighborhood on which the symplectic structure appears as $p^*\omega_V+d(s\alpha)$ just as in the cut construction.  Applying the cut to this hypersurface replaces this neighborhood with a union of two disc bundles over symplectically embedded copies of $V$, with connections induced by the $1$-form $\alpha$, and as noted in Section \ref{bundlestruct} the symplectic forms on these disc bundles induced by the cut construction coincide with the symplectic forms that we began with as Weinstein-type models of the neighborhoods of $i_{\pm}(V)$.  Conversely, if we start with $Y\subset (X,\omega)$ and apply the cut followed by the sum, we have seen that, first, a neighborhood $(-\pi\ep^{2},\pi\ep^2)\times Y$ with the symplectic structure $p^*\omega_V+d(s\alpha)$  gets replaced by two $\ep$-disc bundles, and then upon summing these disc bundles get replaced by the original  $(-\pi\ep^2,\pi\ep^2)\times Y$.  

\begin{center}\begin{figure}
\includegraphics[scale=0.5]{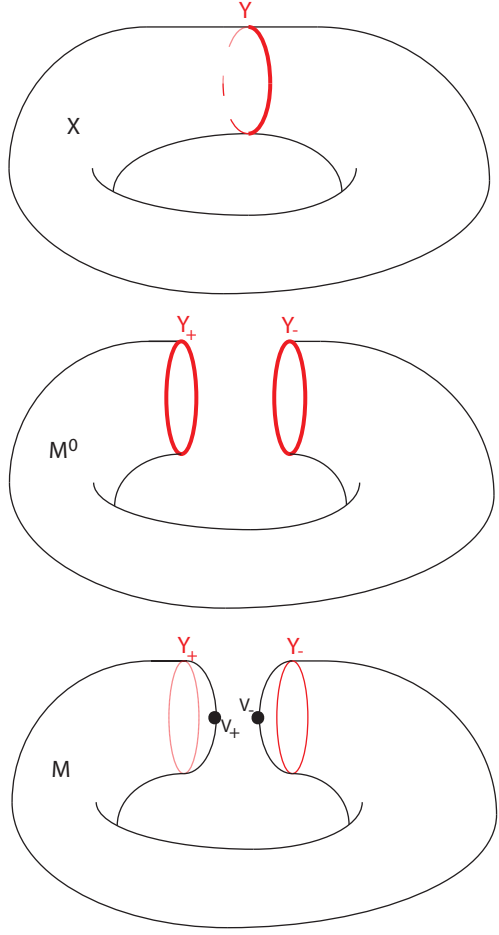} 
\caption{The manifolds $X$, $M^0$, and $M$ in a case where $2n=2$, where $M$ is the symplectic cut of $X$ along $Y$ (and, conversely, $X$ is obtained from $M$ by the symplectic sum).  We write $V_{\pm}=i_{\pm}(V)$. Contrary to what the picture suggests, $X$ and $M$ have equal volume.}
\end{figure}
\end{center}

\section{The main cohomology calculation}\label{calc}

Consider as before a closed  oriented $(2n-2)$-dimensional manifold $V$ (the symplectic structures won't really be relevant in this section), a principal $S^1$-bundle $p\co Y\to V$, and an inclusion $Y\to X$ as a hypersurface in an oriented manifold $X$, with a tubular neighborhood identified with $[-\ep,\ep]\times Y$  (even without symplectic structures, $Y$ obtains an orientation by the ``fiber-first'' prescription for $p\co Y\to V$, so since $Y$ is a closed oriented hypersurface of an oriented manifold such a tubular neighborhood exists, with the orientation of $TX|_Y$ given by the orientation on $[-\ep,\ep]$ followed by that on $Y$).  

Let us first re-describe the topological operation underlying the (symplectic) cut of $X$ along $Y$. Let $M^0=X\setminus (-\ep,\ep)\times Y$; thus where $Y_{\pm}=\{\pm\ep\}\times Y$ we have, as oriented manifolds, $\partial M^0=(-Y_+)\coprod Y_-$, where we take the orientations on $Y_{\pm}$ to both be the same as the orientations on $Y$.  Then where $\mathcal{N}_{\pm}$ are the associated disc bundles \[ \mathcal{N}_{\pm}=\frac{Y_+\times D^2}{(e^{i\theta}y,z)\sim (y,e^{\pm i\theta}z)} \] over $V$, we have canonical identifications $\partial \mathcal{N}_{\pm}=\pm Y_{\pm}$.  The zero-sections of these disc bundles are the images of canonical inclusions $i_{\pm}\co V\to \mathcal{N}_{\pm}$

Denote by $\mathcal{N}$ and $\tilde{Y}$ the \emph{oriented} manifolds $\mathcal{N}=\mathcal{N}_+\coprod \mathcal{N}_-$ and $\tilde{Y}=\partial\mathcal{N}$.  Then the cut of $X$ along $Y$ is, as a smooth oriented manifold, \[ M=\mathcal{N}\cup_{\tilde{Y}}M^0, \] \emph{i.e.}, $M$ is the manifold obtained by gluing $\mathcal{N}$ and $M^0$ along $\tilde{Y}=\partial \mathcal{N}=-\partial M^0$.

Below we will use $i$ as a generic symbol for inclusion maps.  We have a Mayer-Vietoris sequence \begin{equation}\label{mv1} \xymatrix{ H^2(M;\mathbb{R})\ar[r]^<<<<{\left(\begin{array}{c}i^* \\ i^*\end{array}\right)} & H^2(\mathcal{N};\mathbb{R})\oplus H^2(M^0;\mathbb{R})\ar[r]^>>>>{(i^*,-i^*)} & H^2(\tilde{Y};\mathbb{R})\ar[r]^{\delta} & H^3(M;\mathbb{R}) }\end{equation}

Now $\mathcal{N}_{\pm}$ deformation retracts onto $V$ via maps $r_{\pm}$ such that, where $i\co Y_{\pm}\to \mathcal{N}_{\pm}$ is the inclusion and $p\co Y_{\pm}\to V$ is the projection (obtained by the canonical identification of $Y_{\pm}$ with $Y$), we have $r_{\pm}\circ i=p$.  Thus we have identifications $H^2(\mathcal{N};\mathbb{R})\cong H^2(V;\mathbb{R})\oplus H^2(V;\R)$ and $H^2(\tilde{Y};\R)=H^2(Y;\R)\oplus H^2(Y;\R)$, under which the inclusion induced map $i^*\co H^2(\mathcal{N};\R)\to H^2(\tilde{Y};\R)$ becomes $\left(\begin{array}{cc} p^* & 0\\ 0 & p^*\end{array}\right)$.  Thus the Mayer-Vietoris sequence (\ref{mv1}) becomes
\begin{equation} \label{mv2} \xymatrix{
H^2(M;\R)\ar[r]^<<<<{\left(\begin{array}{c}i_{+}^{*}\\i_{-}^{*}\\i^*\end{array}\right)} & H^2(V;\R)\oplus H^2(V;\R)\oplus H^2(M^0;\R) \ar[r]^>>>>>>{A} & H^2(Y;\R)\oplus H^2(Y;\R) \ar[d]^{\delta} \\ & &H^3(M;\R)
}\end{equation} 
where $A\co H^2(V;\R)\oplus H^2(V;\R)\oplus H^2(M^0;\R)\to H^2(Y;\R)\oplus H^2(Y;\R)$ is the map \[ A(v_+,v_-,m)=(p^*v_+ -m|_{Y_+},p^*v_- -m|_{Y_-}).\]

Meanwhile, it is quite generally the case that if $W$ is a closed oriented $m$-manifold formed from oriented manifolds $W_1$ and $W_2$ with closed boundary $Z=\partial W_1=-\partial W_2$ as $W=W_1\cup_{Z}W_2$, then the connecting homomorphism $\delta\co H^{k-1}(Z;\R)\to H^k(W;\R)$ is conjugated by Poincar\'e duality to the inclusion-induced map $i_*\co H_{m-k}(Z;\R)\to H_{m-k}(W;\R)$.  Applying this to our situation (and assuming $M$ to be closed), we have an inclusion \[ \tilde{i}=\tilde{i}_+\coprod \tilde{i}_-\co \tilde{Y}=Y_+\coprod (-Y_-)\to M.\]  Recall our convention that the oriented manifolds $Y_{\pm}=\{\pm\ep\}\times Y$ have the orientations directly inherited from $Y$.  So since reversing the orientation of a manifold reverses the sign of Poincar\'e duality, we have a commutative diagram 
\begin{equation} \label{PDY} \xymatrix{ H^2(Y_+;\R)\oplus H^2(Y_-;\R) \ar[r]^<<<<<<<<{\delta} \ar[dd]_{\left(\begin{array}{cc}PD_Y & 0\\ 0 & PD_Y\end{array}\right)} & H^3(M;\R)\ar[dd]^{PD_M} \\ &  \\  H_{2n-3}(Y_+;\R)\oplus H_{2n-3}(Y_-;\R) \ar[r]^>>>>>{(\tilde{i}_{+\ast}-\tilde{i}_{-\ast})} & H_{2n-3}(M;\R) }
\end{equation} where $\tilde{i}_{\pm}$ are the inclusions of $Y_{\pm}$ into $M$.

Meanwhile, there is an obvious map of pairs $\pi\co (M^0,\partial M^0)\to (X,Y)$ which collapses the two boundary components $Y_{\pm}=\{\pm\ep\}\times Y$ to $Y$.  The cohomology exact sequences of the pairs $(X,Y)$ and $(M^0,\partial M_0)=(M^0,\tilde{Y})$ give a commutative diagram with exact rows  
\begin{equation}\label{reles} \xymatrix{
H^2(X;\R)\ar[d] \ar[r] & H^2(Y;\R) \ar[d]\ar[r] & H^3(X,Y;\mathbb{R}) \ar[d] \\
H^2(M^0;\R)\ar[r] & H^2(\tilde{Y};\R) \ar[r] & H^3(M^0,\tilde{Y};\R) 
} \end{equation}

Now it is easy to see by an excision argument that the maps $\pi^*\co H^*(X,Y;\R)\to H^*(M^0,\partial M^0;\R)$ are isomorphisms; in particular the rightmost vertical arrow in the above diagram is an isomorphism.  Meanwhile, under the obvious identifications $H^2(\tilde{Y};\R)\cong H^2(Y_+;\R)\oplus H^2(Y_-;\R)=H^2(Y;\R)\oplus H^2(Y;\R)$ the middle vertical arrow in (\ref{reles}) is the map $a\mapsto (a,a)$.  Consequently the exactness of the rows shows that \begin{align}\label{m0xy} Im &\left(i^*\co H^2(X;\R) \to H^2(Y;\R)\right)\nonumber \\ 
&=
\left\{a\in H^2(Y;\R)\left| (a,a)\in Im\left(\left(\begin{array}{cc}i^* \\ i^*\end{array}\right)\co H^2(M^0)\to H^2(Y_+;\R)\oplus H^2(Y_-;\R) \right) \right.\right\}.\end{align}

We can now prove the main lemma:

\begin{lemma}\label{maincalc}  Assume that $X$ and hence $M$ are closed and that either \begin{itemize} \item[(i)] The inclusion-induced maps $H^2(M^0;\R)\to H^2(Y_+;\R)$ and $H^2(M^0;\R)\to H^2(Y_-;\R)$ are equal (with respect to the canonical identifications $H^2(Y_{\pm};\R)=H^2(Y;\R)$); \textbf{or}
\item[(ii)] The map $i_{+\ast}+i_{-\ast}\co H_2(V;\R)\oplus H_2(V;\R)\to H_2(M;\R)$ is injective.\end{itemize}
Then where $p_{!}\co H^2(Y;\R)\to H^1(V;\R)$ is the Gysin map\begin{align}\label{mainequality} Im(p_!\circ i^* &\co H^2(X;\R)  \to H^1(V;\R))\nonumber \\=&(Im\, p_!)\cap PD_V\left(\ker\left((i_+)_*-(i_-)_*\co H_{2n-3}(V;\R)\to H_{2n-3}(M;\R)\right)\right).\end{align}
\end{lemma}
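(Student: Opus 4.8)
The plan is to establish the two inclusions of (\ref{mainequality}) separately, using the description (\ref{m0xy}) of $Im(i^{*}\co H^2(X;\R)\to H^2(Y;\R))$ together with three elementary facts. First, by the very definition of the Gysin map one has $PD_V\circ p_!=p_*\circ PD_Y$, and $p_!\circ p^{*}=0$ by the Gysin sequence (\ref{gysin}). Second, since $Y_{\pm}=\partial\mathcal N_{\pm}$ and $\mathcal N_{\pm}$ deformation retracts onto its zero section $i_{\pm}(V)$, the inclusion $\tilde i_{\pm}\co Y_{\pm}\hookrightarrow M$ is homotopic in $M$ to $i_{\pm}\circ p$, so $\tilde i_{\pm\ast}=(i_{\pm})_{*}\circ p_{*}$ on $H_{2n-3}$. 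Third, over $\R$ the hypothesis (ii) that $(i_+)_{*}+(i_-)_{*}\co H_2(V;\R)\oplus H_2(V;\R)\to H_2(M;\R)$ is injective is equivalent, by duality of $H_*$ and $H^*$ on closed manifolds, to surjectivity of $(i_+^{*},i_-^{*})\co H^2(M;\R)\to H^2(V;\R)\oplus H^2(V;\R)$. Combining the first two facts with the Poincar\'e-duality square (\ref{PDY}) yields the key identity $PD_M(\delta(a,a))=((i_+)_{*}-(i_-)_{*})(PD_V(p_!a))$ for all $a\in H^2(Y;\R)$; this is the dictionary linking the connecting map $\delta$ of (\ref{mv2}) to the homological side of (\ref{mainequality}).

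For the inclusion $\subseteq$, which needs neither hypothesis: given $\phi=p_!(i^{*}\xi)$ with $\xi\in H^2(X;\R)$, I put $a=i^{*}\xi$, so $\phi=p_!a\in Im\,p_!$. By (\ref{m0xy}) there is $m\in H^2(M^0;\R)$ with $m|_{Y_+}=m|_{Y_-}=a$, and then $(a,a)=A(0,0,-m)\in Im(A)=\ker\delta$ by exactness of (\ref{mv2}). The key identity gives $((i_+)_{*}-(i_-)_{*})(PD_V(p_!a))=PD_M(\delta(a,a))=0$, so $\phi$ also lies in $PD_V(\ker((i_+)_{*}-(i_-)_{*}))$, which is the desired inclusion.

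For $\supseteq$, I start with $\phi=p_!c$, $c\in H^2(Y;\R)$, with $PD_V(\phi)\in\ker((i_+)_{*}-(i_-)_{*})$. Reading the key identity backwards with $a=c$ shows $PD_M(\delta(c,c))=0$, so $(c,c)\in\ker\delta=Im(A)$ and there are $v_{\pm}\in H^2(V;\R)$ and $m\in H^2(M^0;\R)$ with $c=p^{*}v_+-m|_{Y_+}=p^{*}v_--m|_{Y_-}$. Under hypothesis (i) the two restrictions $m|_{Y_{\pm}}$ agree, so $a:=c-p^{*}v_+=-m|_{Y_+}=-m|_{Y_-}$ is the restriction to $Y_+\sqcup Y_-$ of $-m$; by (\ref{m0xy}) this gives $a\in Im(i^{*}\co H^2(X;\R)\to H^2(Y;\R))$, while $p_!a=p_!c-p_!p^{*}v_+=\phi$, so $\phi\in Im(p_!\circ i^{*})$. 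Under hypothesis (ii), the duality reformulation produces $\mu\in H^2(M;\R)$ with $i_{\pm}^{*}\mu=v_{\pm}$; exactness of (\ref{mv2}) at its middle term forces $A(v_+,v_-,\mu|_{M^0})=0$, i.e.\ $p^{*}v_{\pm}=\mu|_{Y_{\pm}}$, so $m':=\mu|_{M^0}-m$ satisfies $m'|_{Y_+}=m'|_{Y_-}=c$; by (\ref{m0xy}) this gives $c\in Im(i^{*}\co H^2(X;\R)\to H^2(Y;\R))$ directly, hence $\phi=p_!c\in Im(p_!\circ i^{*})$.

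The routine parts are the Mayer--Vietoris and long-exact-sequence bookkeeping already assembled in the preceding pages; the substance lies in the two uses of the hypotheses---under (i), subtracting $p^{*}v_+$ so that both boundary restrictions of a single class on $M^0$ land on the diagonal, and under (ii), the duality reformulation followed by the use of exactness of (\ref{mv2}) to manufacture the corrected class $m'$ on $M^0$. I expect the main obstacle to be keeping the Poincar\'e-duality bookkeeping consistent---in particular the orientation conventions on the $Y_{\pm}$ built into (\ref{PDY}), the factorization $\tilde i_{\pm\ast}=(i_{\pm})_{*}p_{*}$, and the identity $PD_V\circ p_!=p_*\circ PD_Y$---since a sign error there would interchange the kernel with a complementary subspace and destroy the claimed equality.
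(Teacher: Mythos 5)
Your proposal is correct and follows essentially the same route as the paper's proof: the same use of (\ref{m0xy}), the Mayer--Vietoris sequence (\ref{mv2}), the Poincar\'e-duality square (\ref{PDY}) together with the factorization $\tilde i_{\pm\ast}=(i_{\pm})_{*}p_{*}$ and $p_!=PD_V\circ p_*\circ PD_Y$, and the same two-case treatment of the reverse inclusion (subtracting $p^{*}v_+$ under (i), and using surjectivity of $(i_+^{*},i_-^{*})$ dual to hypothesis (ii) to correct $m$ by a class restricted from $M$). The only differences are cosmetic (e.g.\ invoking (\ref{m0xy}) rather than restricting $\xi$ directly to $M^0$ in the forward inclusion).
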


\begin{cor}\label{4cor} If $2n=4$ and if the surfaces $i_+(V)$ and $i_-(V)$ are homologically linearly independent in $M$, then \begin{equation}\label{maindim4}
Im(p_!\circ i^*\co H^2(X;\R)\to H^1(V;\R))=
 PD_V\left(\ker\left((i_+)_*-(i_-)_*\co H_{1}(V;\R)\to H_{1}(M;\R)\right)\right)\end{equation}
\end{cor}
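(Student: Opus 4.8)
The plan is to obtain Corollary \ref{4cor} as the $2n=4$ specialization of Lemma \ref{maincalc}(ii) (keeping in force the standing assumption that $X$, and hence $M$, is closed). The first step is to check that the hypothesis of part (ii) of that lemma is exactly the linear-independence assumption in the corollary. Since here $\dim V=2n-2=2$, the group $H_2(V;\R)$ is one-dimensional, generated by the fundamental class $[V]$; under this identification the map $i_{+\ast}+i_{-\ast}\co H_2(V;\R)\oplus H_2(V;\R)\to H_2(M;\R)$ becomes $(s,t)\mapsto s[i_+(V)]+t[i_-(V)]$, which is injective if and only if $[i_+(V)]$ and $[i_-(V)]$ are linearly independent in $H_2(M;\R)$. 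Thus the hypothesis of Lemma \ref{maincalc}(ii) holds, and the equality (\ref{mainequality}), evaluated with $2n-3=1$, gives
\[ Im(p_!\circ i^*)=(Im\,p_!)\cap PD_V\!\left(\ker\!\left((i_+)_*-(i_-)_*\co H_1(V;\R)\to H_1(M;\R)\right)\right). \]

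The second step is to observe that in this dimension the factor $Im\,p_!$ is redundant. By the Gysin sequence (\ref{gysin}) for $p\co Y\to V$ (taken with $k=2$), we have $Im\bigl(p_!\co H^2(Y;\R)\to H^1(V;\R)\bigr)=\ker\bigl(e\cup\cdot\co H^1(V;\R)\to H^3(V;\R)\bigr)$; but $V$ is a closed surface, so $H^3(V;\R)=0$ and hence $Im\,p_!=H^1(V;\R)$. Intersecting with the whole space changes nothing, so the displayed equality collapses to (\ref{maindim4}), the assertion of the corollary.

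I do not expect a genuine obstacle here: the corollary is a routine low-dimensional reduction of Lemma \ref{maincalc}, and the only points requiring (minor) care are the bookkeeping identifications $H_2(V;\R)\cong\R$ and $2n-3=1$, and the vanishing $H^3(V;\R)=0$ that upgrades $p_!$ to a surjection. If there is a subtlety worth flagging, it is just to make explicit that ``homologically linearly independent'' is precisely the translation of the injectivity hypothesis (ii)---which is what the first step accomplishes.
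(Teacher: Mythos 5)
Your proposal is correct and follows the paper's own argument: the paper likewise notes that since $H_2(V;\R)$ has rank one the homological linear independence of $i_\pm(V)$ is exactly condition (ii) of Lemma \ref{maincalc}, and that $H^3(V;\R)=0$ makes $p_!$ surjective via the Gysin sequence, so the factor $Im\,p_!$ drops out. No gaps; the bookkeeping identifications you flag are exactly the points the paper relies on.
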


Indeed, in this case $H_2(V;\R)$ has rank one and so the linear independence of the $i_{\pm}(V)$ is equivalent to condition (ii) in Lemma \ref{maincalc}; moreover since $H^3(V;\R)=0$ when $2+\dim V=2n=4$ the Gysin sequence (\ref{gysin}) shows that $p_!$ is automatically surjective.

\begin{proof}[Proof of Lemma \ref{maincalc}] Recall that $p_!=PD_V\circ p_*\circ PD_Y$ (here we use $PD_W$ to denote Poincar\'e duality in a closed oriented manifold $W$ and its inverse, both considered as a map from homology to cohomology and vice versa; in particular $PD_W$ squares to the identity).  Now within $M$, any chain $c$ in $Y_{\pm}$ is homologous to its image $p_*c$ in $i_{\pm}(V)$.  Consequently for $\eta\in H_*(Y_{\pm};\R)$ we have $(\tilde{i}_{\pm})_*\eta=(i_{\pm})_*p_*c$.

For the forward inclusion (which the proof will show to be true even without hypotheses (i) or (ii)), suppose that $x\in Im(p_{!}\circ i^*)$; say $x=p_{!}(i^*n)$ where $n\in H^*(X;\R)$; write $a=i^*n$.  Then where $m=n|_{M^0}$ we have $(a,a)=A(0,0,-m)$, and so $\delta(a,a)=0\in H^3(M;\R)$.  Hence by the commutativity of (\ref{PDY}) and by the first paragraph of this proof, \[ (i_+)_*p_*PD_Y(a)-(i_-)_*p_*PD_Y(a)=(\tilde{i}_{+})_*PD_Y(a)-(\tilde{i}_-)_*PD_Y(a)=0.\]
So since $x=p_!a$, \emph{i.e.}, $PD_V(x)=p_*PD_Y(a)$, this shows that $x$ belongs to the right-hand side of (\ref{mainequality}), proving the forward inclusion.

As for the reverse inclusion, assume that $(i_+)_*PD_V(x)-(i_-)_*PD_V(x)=0$ and that $x=p_!a$ where $a\in H^2(Y;\R)$.  Thus $p_*PD_Y(a)=PD_V(x)$, so by the first paragraph of the proof $(\tilde{i}_+)_*PD_Y(a)-(\tilde{i}_+)_*PD_Y(a)=0$, and so by the commutativity of (\ref{PDY}) $\delta(a,a)=0$.  Thus by the exactness of (\ref{mv2}) there are $m\in H^2(M_0;\R), v_{\pm}\in H^2(V;\R)$ so that \begin{equation}\label{twoa} a=p^*v_+-m|_{Y_+}=p^*v_- - m|_{Y_-}.\end{equation}
  
If we are in case (i), then $m|_{Y_-}=m|_{Y_+}$, so that $a-p^*v_+=a-p^*v_-$, and by the exactness of (\ref{gysin}) (specifically the fact that $p_!\circ p^*=0$) we have $p_!(a-p^*v_+)=p_!a=x$.  Moreover  $(a-p^*v_+,a-p^*v_+)\in Im(H^2(M_0;\R)\to H^2(Y_+;\R)\oplus H^2(Y_-;\R))$; thus by (\ref{m0xy}) we have $a-p^*v_+\in Im(H^2(X;\R)\to H^2(Y;\R))$.  Thus \[ x=p_!(a-p^*v_+)\in Im(p_!\circ i^*),\] proving the lemma in case (i).

As for case (ii), the assumption that $(i_+)_*+(i_-)_*\co H_2(V;\R)\oplus H_2(V;\R)\to H_2(M;\R)$ is injective implies that the dual map 
$(i_{+}^{*},i_{-}^{*})\co H^2(M;\R)\to H^2(V;\R)\oplus H^2(V;\R)$ is surjective.  So where $v_+,v_-,m$ are as in (\ref{twoa}), there is $n\in H^2(M;\R)$ so that $i_{+}^{*}n=p^*v_+$ and $i_{-}^{*}n=p^*v_-$. But then $n|_{M^0}-m$ will restrict both to $Y_+$ and to $Y_-$ as $a$, and so by (\ref{m0xy}) $a\in Im(H^2(X;\R)\to H^2(Y;\R))$.  Thus $x\in Im(p_!\circ i^*)$ in this case as well.
\end{proof}

\begin{remark}\label{case1} In most examples below, we will begin not with the manifold $X$ but with manifold $M$, as a result of which interpreting Case (i) of Lemma \ref{maincalc} will sometimes be more subtle since $Y$ is not part of the initial data.  As such it is useful to re-express the condition of Case (i) so that it can be interpreted directly in terms of the initial data for the symplectic sum.  To do this, note that $Y_{\pm}$ appear as boundaries of symplectic tubular neighborhoods of $i_{\pm}(V)$; these tubular neighborhoods give rise to embeddings $I_{\pm}\co Y_{\pm}\to X\setminus(i_+(V)\cup i_-(V))$.  Meanwhile, considering the $Y_{\pm}$ as unit-circle bundles in $L_{\pm}=i_{\pm}^{*}Ti_{\pm}(V)^{\omega'}$, the map $\Phi$ restricts to a diffeomorphism $\Phi\co Y_+\to Y_-$.  In these terms, it is easy to see that the condition in Case (i) of Lemma \ref{maincalc} translates to the statement that the maps $I_{+}^{*},(I_-\circ \Phi)^{*}\co H^2(X\setminus (i_+(V)\cup i_-(V));\R)\to H^2(Y_+;\R)$ are equal. \end{remark}

\section{Examples}\label{ex}

\subsection{Examples using $M=T^{2n-2}\times S^2$}
Let us first see how the most familiar example of a manifold admitting aperiodic symplectic forms can be interpreted in our context.

\begin{ex}[The torus] \label{torus}
Let $(X,\omega)$ be the $2n$-dimensional torus, with the standard symplectic form $\omega=\sum_{i=1}^{n}dx^{2i-1}\wedge dx^{2i}$ (we take $x_i\in(\mathbb{R}/\mathbb{Z})$).  Let $(V,\omega_V)$ be the $(2n-2)$-dimensional torus, with $\omega_V=\sum_{i=1}^{n-1}dx^{2i-1}\wedge dx^{2i}$.
If \[ Y=\{(x_1,\ldots,x_{2n})\in X|x_{2n}=0\},\] we have a principal bundle $p\co Y\to V$ given by $(x_1,\ldots,x_{2n-1},0)\mapsto (x_1,\ldots,x_{2n-2})$ (with the $S^1$ action $e^{2\pi it}(x_1,\ldots,x_{2n-1},0)=(x_1,\ldots,x_{2n-1}+t,0)$ on $Y$), and $Y$ is $\omega_V$-embedded in $X$.  Consequently we may construct the symplectic cut $(M,\omega')$ of $X$ along $Y$: topologically this is equivalent to cutting $X$ along $Y$ to obtain $M^0=T^{2n-2}\times (S^1\times[0,1])$, and then collapsing $S^1\times \{0\}$ and $S^1\times \{1\}$ to points (denoted $0$ and $\infty$ below), yielding a manifold $M$ diffeomorphic to $T^{2n-2}\times S^2$.  Following through the details of the symplectic cut construction, one sees that $(M,\omega')$ is symplectomorphic to $T^{2n-2}\times S^2$ (with a split symplectic form where the $T^{2n-2}$ factor is symplectomorphic to $V$ and the $S^2$ factor has area $1$); the embeddings $i_{\pm}\co V\to M$ are given by the obvious inclusions of  $T^{2n-2}\times \{0\}$ and $T^{2n-2}\times\{\infty\}$.  The symplectic normal bundles to $i_{\pm}(V)$ are naturally identified with the trivial bundles over $T^{2n-2}$ with fibers $T_{\infty}S^2$ and $T_0 S^2$, and the orientation-reversing bundle isomorphism $\Phi\co i_{+}^{*}Ti_+(V)^{\omega'}\to i_{-}^{*}Ti_-(V)^{\omega'}$ is induced by the map $T_{\infty}S^2\to T_0S^2$ obtained by linearizing $z\mapsto \frac{1}{\bar{z}}$ (where we make the usual identification of $S^2$ with $\mathbb{C}\cup\{\infty\}$).

Conversely, we could have started with the symplectic manifold $(M,\omega')$ defined as the product of $T^{2n-2}\times S^2$ with a split symplectic structure, together with the inclusions $i_{\pm}\co V\to M$ of $T^{2n-2}\times\{0\}$ and $T^{2n-2}\times\{\infty\}$ and the map between their normal bundles obtained by linearizing $z\mapsto\frac{1}{\bar{z}}$.  Applying the symplectic sum construction to these initial data yields the torus $T^{2n}$ with its standard symplectic structure.   Up to isotopy, in the language of Remark \ref{case1} and identifying $Y_{\pm}$ with $T^{2n-2}\times S^1$, the maps 
$I_+$ and $I_-\circ \Phi$ can be taken to be, respectively \[ (v,e^{i\theta})\mapsto \left(v,\frac{1}{\ep e^{i\theta}}\right)\mbox{ and }(v,e^{i\theta})\mapsto (v,\ep e^{-i\theta}) \] for small $\ep$.  In particular $I_+$ and $I_-\circ \Phi$ are homotopic, so the induced maps $H^2(X\setminus(i_+(V)\cup i_-(V));\R)\to H^2(Y_+;\R)$ are equal and Case (i) of Lemma \ref{maincalc} applies.  Now the maps $i_+$ and $i_-$ are homotopic, so $(i_+)_*-(i_-)_*\co H_{2n-3}(V;\R)\to H_{2n-3}(M;\R)$ vanishes identically and so Lemma \ref{maincalc} shows that \[ Im(p_!\circ i^*\co H^2(X;\R)\to H^1(V;\R))=H^1(V;\R).\]

Consequently Lemma \ref{irrat} allows us to use any nonzero element $x\in H^1(V;\R)$ to perturb the symplectic form $\omega$ to forms $\omega+u\eta$ (where $p_![\eta]=x$) such that, for all but countably many $u$, the hypersurface $Y$ has a tubular neighborhood in which all of its parallel slices have no closed characteristics with respect to $\omega+u\eta$.  In particular $\omega_u$ is aperiodic for all but countably many values of $u$; indeed, $\omega_u$ can be taken to be one of the forms in Zehnder's original example \cite{Ze}.
\end{ex}

\begin{ex}[The Kodaira--Thurston manifold and variants] \label{kt}  Again let $(V,\omega_V)$ be the $(2n-2)$-torus with its standard symplectic structure.  Let $(M,\omega')$ be the sympelctic manifold $V\times S^2$ with a split symplectic structure, just as in Example \ref{torus}.   Let $\psi\in SL(2n-2,\mathbb{Z})\cap Sp(2n-2,\mathbb{R})$ be any linear symplectomorphism of $V$.  By applying the symplectic sum to $(M,\omega')$ we will construct a symplectic $2n$-manifold $(X_{\psi},\omega)$ which is equal to the standard $2n$-torus when $\psi$ is equal to the identity and for which, more generally, $\omega$ admits aperidic perturbations $\omega_u$ whenever $1$ is an eigenvalue of $\psi$.  When $2n=4$ and $\psi=\left(\begin{array}{cc}1 & 1\\0 & 1\end{array}\right)$, $X_{\psi}$ will be equal to the standard Kodaira--Thurston manifold \cite{Th}.

To do this, define symplectic embeddings $i_{\pm}\co V\to M$ by $i_+(v)=(v,\infty)$ and $i_-(v)=(\psi(v),0)$.  The pullbacks of the symplectic normal bundles $i_{\pm}^{*}Ti_{\pm}(V)^{\omega'}$ have obvious identifications as the trivial bundles over $V$ with fibers, respectively $T_{\infty}S^2$ and $T_0 S^2$, so just as in Example \ref{torus} we can let the isomorphism $\Phi$ be the map that results from linearizing $z\mapsto \frac{1}{\bar{z}}$ in these trivializations.  This results in a symplectic manifold $(X_{\psi},\omega)$ containing a distinguished hypersurface $Y$.

In the notation of Remark \ref{case1} we have $I_+(v,e^{i\theta})=(v,\ep^{-1}e^{-i\theta})$ and $I_-\circ\Phi(v,e^{i\theta})=(\psi(v),\ep e^{i\theta})$, so if $\psi$ is not the identity Case (i) of Lemma \ref{maincalc} does \emph{not} apply.  So since Case (ii) clearly doesn't apply either we cannot use Lemma \ref{maincalc} in this example.  However, we can determine $Im(p_!\circ i^*\co H^2(X_{\psi};\R)\to H^1(V;\R))$ by a more direct route.

Indeed, up to diffeomorphism preserving $Y$, $X_{\psi}$ is formed by removing tubular neighborhoods of $i_{\pm}(V)$ with boundaries $Y_{\pm}$ to create $M^0$, and then by gluing the resulting boundary components by the diffeomorphism $\Phi\co Y_+\to Y_-$ to form $X_{\psi}$, with $Y$ appearing as the image of either boundary component under the quotient $M^0\to X$.  In this case we evidently have $M^0\cong T^{2n-2}\times [0,1]\times S^1$, with $\Phi$ acting by $(v,1,e^{i\theta})\mapsto (\psi(v),0,e^{i\theta})$.  In other words,  where $T_{\psi}\to (\mathbb{R}/\mathbb{Z})$ is the mapping torus of $\psi\co V\to V$, we have $X_{\psi}=S^1\times T_{\psi}$ with $Y$ appearing as $S^1\times V\times\{0\}$ and the projection $p\co Y\to V$ just given by the projection onto the $V$ factor.  Consequently \[ Im(p_{!}\circ i^*\co H^2(X;\R)\to H^1(V;\R))= Im(H^1(T_{\psi};\R)\to H^1(V;\R)) \] where the map on the right is that induced by the inclusion of a fiber into the mapping torus $T_{\psi}$. This map fits into a well-known exact sequence \[\xymatrix{ H^0(V;\R)\ar[r]& H^1(T_{\psi};\R)\ar[r]&H^1(V;\R)\ar[r]^{1-\psi^*} & H^1(V;\R) },\] and in particular the image of the map is equal to $\ker(1-\psi^*)$, which is nontrivial by the assumption that $\psi$ has $1$ as an eigenvalue.  Thus whenever $1$ is an eigenvalue of $\psi$, Lemma \ref{irrat} provides symplectic forms $\omega_u$ on $X_{\psi}$ with respect to which $Y$ violates the nearby existence property, and so $\omega_u$ is aperiodic.
\end{ex} 

\subsection{Homotopy elliptic surfaces}\label{elliptic} \begin{ex}[The surfaces $E(n)$]\label{k3}
Let $f_0,f_1\in\mathbb{C}[x_0,x_1,x_2]$ be two generic cubic homogeneous polynomials, so that their common projective vanishing locus in $\mathbb{C}P^2$ consists of nine distinct points.  Symplectically blow up $\mathbb{C}P^2$ at these nine points to obtain a symplectic manifold $(M_+,\omega'_{+})$, which contains a self-intersection zero torus $T_+$ arising as the proper transform of the zero set of $f_0$ in $\mathbb{C}P^2$.  The manifold $M_+$ (which is sometimes denoted $E(1)$) comes equipped with a singular fibration $\pi\co M_+\to \mathbb{C}P^1$ whose fiber over $[t_0:t_1]$ is the proper transform of the vanishing locus of $t_0f_0+t_1f_1$.  In particular the symplectic normal bundle of $T_+=\pi^{-1}([1:0])$ may be identified with the trivial bundle over $T_+$ whose fiber is $T_{[1:0]}\mathbb{C}P^1$.  Let $(M_-,\omega'_{-})$ be another copy of $(M_+,\omega'_+)$, containing a symplectic torus $T_-$ corresponding to $T_+$, and let $(M,\omega')$ be the disjoint union of the symplectic manifolds $(M_{\pm},\omega'_{\pm})$.  Where $\Phi\co (TT_{+})^{\omega'}\to (TT_-)^{\omega'}$ is the orientation-reversing bundle isomorphism which acts as the identity on the factor $T_+$ of $(TT_{+})^{\omega'}=T_+\times T_{[1:0]}\mathbb{C}P^1$ and which acts by complex conjugation on the other factor, we can then form the symplectic sum $(X,\omega)$ associated to the data $(M,T_{\pm},\Phi)$.  This manifold carries a singular fibration $X\to S^2$ with generic fiber a symplectic square-zero torus; the distinguished hypersurface $Y\subset X$ associated to the symplectic sum is the preimage of the equator under this fibration.  As is well-known (see for instance \cite[Sections 3.1, 7.3]{GS}), $X$ is diffeomorphic to the $K3$ surface (which we will denote by $E(2)$ below), \emph{i.e.}, the zero locus in $\mathbb{C}P^3$ of a generic quartic.

Now the tori $T_{\pm}$ are obviously homologically linearly independent in $M$ (they are homologically nontrivial by virtue of being symplectic, and they are on different connected components), and $M$ is simply-connected, so  Corollary \ref{4cor} applies to show that the composition $p_{!}\circ i^*\co H^2(X;\R)\to H^2(Y;\R)\to H^1(T_+;\R)$ is surjective.  Hence Lemma \ref{irrat} produces symplectic forms $\omega_u$ on $E(2)$ with respect to which the hypersurface $Y$ violates the nearby existence property.  


One can iterate this construction, applying the symplectic sum first to $E(2)\coprod M_-$ along $T$ and $T_-$ to obtain the elliptic surface $E(3)$ with generic fiber a symplectic torus $T(3)$, and then for $n\geq 3$, applying the symplectic sum to $E(n)\coprod M_-$ along $T(n)$ and $T_-$ to obtain $E(n+1)$.  For all $n$, $E(n)$ and $E(n)\setminus T(n)$ are both simply connected.  Applying Lemma \ref{irrat} then produces, for all $n\geq 2$, symplectic forms on $E(n)$ such that a hypersurface $Y(n)$ disjoint from $T(n)$ violates the nearby existence property.
\end{ex}

\begin{remark} \label{othersum} Suppose that $(M_+,\omega'_+)$ is a symplectic four-manifold containing a closed hypersurface $Y$ which violates the nearby existence property, and containing additionally a closed symplectic surface $V_+$ which is disjoint from $Y$.  If $(M_-,\omega'_-)$ is any symplectic four-manifold containing a closed symplectic surface $V_-$ having the same genus and opposite self-intersection number to $V_+$, then after rescaling the symplectic form $\omega'_-$ the surfaces $(V_+,\omega'_+)$ and $(V_-,\omega'_-)$ will be symplectomorphic (since by the Moser trick the genus and area are complete invariants of closed symplectic two-manifolds), and the assumption on the intersection numbers implies that there will exist an orientation-reversing isomorphism of their symplectic normal bundles.  Thus we can apply the symplectic sum construction to the manifold $M=M_+\coprod M_-$ along $V_+$ and $V_-$ to obtain a manifold $(X,\omega)$.  (Indeed, we can do this in several ways, since there is some freedom in the choice of symplectomorphism between $V_-$ and $V_+$ and in the choice of orientation-reversing bundle isomorphism).  Since $Y\subset M_+$ is disjoint from $V_+$, the sum $(X,\omega)$ will contain a neighborhood symplectomorphic to any given sufficiently small tubular neighborhood of $Y$ in $M_+$.  Consequently the fact that $Y$ violates the nearby existence property in $(M_+,\omega_+)$ implies that it also violates this property in $(X,\omega)$.

In particular, applying this with $M_+=E(n)$ and $V_+=T(n)$, we conclude that for any $n\geq 2$, we can choose the symplectic form on $E(n)$ in such a way that any symplectic sum of $E(n)$ with another symplectic manifold along  $T(n)$ will be aperiodic.
\end{remark}

\begin{ex}[The log transforms $E(n)_{p,q}$]  If $p$ is an integer, a new elliptic surface $E_p\to S^2$ can be constructed from an old one $E\to S^2$ by the operation of a ``logarithmic transformation of order $p$'': in the smooth category this amounts to removing a neighborhood of a regular fiber and then regluing it by a diffeomorphism $\phi\co T^2\times \partial D^2\to T^2\times \partial D^2$ which sends $\{pt\}\times \partial D^2$ to a loop whose homology class projects to $H_1(\{pt\}\times \partial D^2;\mathbb{Z})$ as a class of order $p$.  When $E=E(n)$ (and in a variety of other cases), this operation can be performed in the symplectic category. The key is that $E(n)$ carries a symplectic cusp neighborhood: by \cite{Sy} and \cite{FS97} we can carry out the operation by blowing up a point on a suitable singular fiber of $E(n)\to S^2$; then performing $p-2$ additional blowups; and finally symplectically rationally blowing down \cite{Sy} an appropriate configuration of $p-1$ symplectic spheres.  The result of this is another symplectic manifold $E(n)_p$.  Performing the operation once again for some other positive integer $q$ produces a manifold $E(n)_{p,q}$.

Now if we start with one of the above-produced aperiodic symplectic forms on $E(n)$, all of the above surgery operations can be performed in a region disjoint from the hypersurface  $Y$ which violates the nearby existence theorem; indeed, $Y$ is the preimage of a circle of regular values of the elliptic fibration $E(n)\to S^2$, and the logarithmic transformations are performed by surgeries that occur within neighborhoods of singular values of the fibration.  (Care should however be taken to keep the perturbation parameter $u$ in the aperiodic symplectic form $\omega_u$ small enough so that the relevant singular fibers will still be symplectic.)  Thus the hypersurface $Y$ survives as a hypersurface which violates the nearby existence theorem in any $E(n)_{p,q}$ with $n\geq 2$, and so all of these manifolds admit aperiodic symplectic forms.  
\end{ex}

\begin{ex}[Exotic elliptic surfaces] In \cite{FS}, Fintushel and Stern associate to any knot $K\subset S^3$ and any $n\geq 1$ a four-manifold $E(n)_K$ which is homeomorphic to $E(n)$, but which is necessarily non-diffeomorphic to $E(n)$ provided that the Alexander polynomial of $K$ is nontrivial.   In the special case that $K$ is a fibered knot (\emph{i.e.}, $S^3\setminus K$ admits a fibration over the circle), $E(n)_K$ naturally admits symplectic structures.  We observe here that, for $n\geq 2$ and $K$ fibered, $E(n)_K$ carries aperiodic symplectic forms.

Indeed, as noted just before \cite[Corollary 1.7]{FS}, when $K$ is fibered $E(1)_K$ can be constructed as follows.  Let $T$ and $T'$ be two fibers of the standard elliptic fibration $E(1)\to S^2$.  Let $M_K$ denote the result of $0$-framed surgery along $K\subset S^3$; then where $g$ is the Seifert genus of  $K$ the fact that $K$ is fibered implies that $3$-manifold $M_K$ admits a fibration $M_K\to S^1$ with a distinguished section corresponding to the core circle of the surgery and with fiber a surface of genus $g$.  Then we have a fibration $S^1\times M_K\to S^1\times S^1$ with a section $T_K$ which is a square-zero torus; the Thurston trick shows that $S^1\times M_K$ admits symplectic forms (obtained  by starting with a closed, fiberwise symplectic form and then adding a large multiple of the pullback of the standard symplectic form on the base $S^1\times S^1$) with respect to which the torus $T_K$ is symplectic.  Then $E(1)_K$ can be seen as the result of applying the symplectic sum construction with $M=E(1)\coprod (S^1\times M_K)$ and $i_+$ and $i_-$ being the inclusions of the tori $T'$ and $T_K$, where the symplectic form is scaled on the two components of $M$ to give $T'$ and $T_K$ equal area (we leave it to the reader to choose the bundle isomorphism $\Phi$; actually the choice will not affect the diffeomorphism type in this example).  

Recall that we specified another fiber $T$ of the fibration $E(1)\to S^2$; this fiber $T$ survives into $E(1)_K$ as a symplectic square zero torus.  In particular, for $n\geq 2$ we can form the symplectic sum of $E(n-1)$ and $E(1)_K$ along $T(n-1)$ and $T$. The result of this symplectic sum is equivalent to what Fintushel and Stern denote by $E(n)_K$, and as usual it contains a distinguished hypersurface $Y$.  Since $T(n-1)$ and $T$ are homologically linearly independent in $E(n-1)\coprod E(1)_K$ and since $E(n-1)\coprod E(1)_K$ is simply connected, Corollary \ref{4cor} and Lemma \ref{irrat} produce symplectic forms on $E(n)_K$ with respect to which $Y$ violates the nearby existence property; hence these forms are aperiodic.
\end{ex}

\begin{remark}\label{prodrem} If $n\geq 2$ and $K$ is any fibered knot (including the unknot), we have seen that $E(n)_K$ admits symplectic forms $\omega_{u,n,K}$ for which there is a hypersurface $Y(n,K)$ violating the nearby existence property.  It immediately follows from this that for any symplectic manifold $(P,\sigma)$, the hypersurface $Y(n,K)\times P\subset X\times P$ violates the nearby existence property with respect to the symplectic form $\omega_{u,n,K}\oplus \sigma$.  Thus we obtain many more examples of aperiodic symplectic forms in dimensions larger than $4$.  Specializing to $P$ equal to $S^2$, it follows as in \cite[Lemma 6.2]{IP99} that for any $K,K'$ there is a diffeomorphism $E(n,K)\times S^2\to E(n,K')\times S^2$ pulling back the cohomology class of $\omega_{u,n,K'}\oplus \sigma$ to that of $\omega_{u,n,K}\oplus\sigma$; moreover, when $K$ and $K'$ have the same Seifert genus this diffeomorphism intertwines the homotopy classes of almost complex structures induced by these symplectic forms.  However, as shown in \cite[Section 6]{IP99}, the symplectic deformation classes of these manifolds $E(n,K)\times S^2$ can be  distinguished from each other using Gromov--Witten invariants by the Alexander polynomial of the knot $K$.  There are infinitely many choices for this Alexander polynomial corresponding to any given knot genus that is larger than one.  Thus for $n\geq 2$ we have infinitely many symplectic structures on the smooth manifold $E(n)\times S^2$, each representing a distinct deformation class but representing the same cohomology class and inducing isotopic almost complex structures, all of which are aperiodic.
\end{remark}

\subsection{Arbitrary fundamental groups}\label{arbpi}
One of the most striking applications of the symplectic sum in Gompf's original papers \cite{Go94},\cite{Gom} on the symplectic sum was the fact that any finitely-presented group $G$ arises as the fundamental group of a symplectic four-manifold $X_G$.  We observe here that Gompf's manifold $X_G$ fits into our setup and so admits symplectic forms with respect to which the nearby existence property fails for some hypersurface.

Let us recall Gompf's construction, which appears in the proof of \cite[Theorem 4.1]{Gom} (and also in that of \cite[Theorem 3.1]{Go94}).  Starting with a finite presentation of $G$ (to avoid trivialities, if $G$ is trivial we assume that this is not the empty presentation), Gompf first constructs a symplectic form $\omega'$ on $F\times T^2$ for some surface $F$, and $\omega'$-symplectic tori $T_{i}\subset F\times T^2$ ($1\leq i\leq m$, where $m\geq 1$ since we assume the presentation to be nontrivial).  A bit more specifically, where $\alpha$ is a nontrivial circle on $T^2$, the $T_{i}\subset F\times T^2$ are embedded perturbations of the immersed tori $\gamma_i\times\alpha$ where the $\gamma_i$ are certain immersed loops in $F$ such that $G\cong \pi_1(F)/\langle \gamma_1,\ldots,\gamma_m\rangle$.  The torus $\{z\}\times T^2$ is also $\omega'$-symplectic, for suitable $z\in F$ chosen that this torus is disjoint from the $T_{i}$.  The symplectic manifold $X_G$ is formed by a sequence of symplectic sums with the rational elliptic surface $E(1)$: first form $X_{G}^{1}$ as the symplectic sum of $F\times T^2$ with $E(1)$ along  $\{z\}\times T^2\subset F\times T^2$ and the standard fiber $T(1)\subset E(1)$ (where the symplectic form on $E(1)$ has been rescaled to give $T(1)$ and $\{z\}\times T^2$ equal area); then, for $1\leq k\leq m$, form $X_{G}^{k+1}$ as the symplectic sum of $X_{G}^{k}$ with $E(1)$ (with an appropriately rescaled symplectic structure) along the tori $T_i\subset X_{G}^{k}$ and $T(1)\subset E(1)$.  Since $E(1)\setminus T(1)$ is simply connected, each summation affects the fundamental group by killing the inclusion-induced image of the fundamental group of the torus being summed along.  In particular we will have \[ \pi_1(X_{G}^{m+1})=\frac{\pi_1(F\times T^2)}{\langle \pi_1(\{z\}\times T^2),\gamma_i\times\alpha\rangle}=\pi_1(F)/\langle \gamma_1,\ldots,\gamma_m\rangle=G,\]
and so we may take the desired manifold $X_{G}$ equal to $X_{G}^{m+1}$.

To connect this to our construction, note that at each stage we are applying the symplectic sum to symplectic tori on different connected components of the manifold, so the tori are certainly homologically independent, allowing us to apply Corollary 
\ref{4cor}.  The inclusion-induced map $\pi_1(T(1))\to \pi_1(E(1))$ is of course trivial since $E(1)$ is simply connected.  Meanwhile, because $\pi_1(E(1)\setminus T(1))$ is trivial and because the first symplectic sum is performed along $\{z\}\times T^2\subset F\times T^2$, the inclusion-induced images $\pi_1(\{w\}\times T^2)\to \pi_1(M_{G}^{k})$ are trivial for all $k\geq 1$.  Recalling that $T_i\subset F\times T^2$ is obtained by perturbing an immersed torus of the form $\gamma_i\times \alpha$ where $\alpha$ is a homologically nontrivial curve in $T^2$, it follows that $\alpha$ gives an infinite-order element in the kernel inclusion-induced map $\pi_1(T_k)\to \pi_1(X_{G}^{k})$ for all $k\geq 1$. This proves that the group on the right-hand side of  (\ref{maindim4}) is nontrivial for the sums which form $X_{G}^{k+1}$ for all $k\geq 1$. Hence for all $k\geq 1$ the ``neck'' hypersurface $Y_{k+1}\subset X_{G}^{k+1}$ formed by the symplectic sum obeys the hypotheses of Lemma \ref{irrat}.  Applying this in particular when $k=m$ shows that Gompf's manifold $X_G$ admits aperiodic symplectic forms.

As is noted in the addendum to \cite[Theorem 4.1]{Gom}, $X_G$ contains a symplectic square zero torus $T$ for which the inclusion-induced map $\pi_1(T)\to \pi_1(X_G)$ is trivial; for instance one can take $T=\{w\}\times T^2$ for suitable $w$, and one can arrange this torus to be disjoint from the ``neck'' $Y_{m+1}$ of the last symplectic sum that was carried out to form $X_G$.  If the perturbation of the symplectic form $\omega_u$ that was used to make $Y_{m+1}$ violate the nearby existence property is small enough, then $T$ will still be $\omega_u$-symplectic.  If we then take the symplectic sum of $X_G$ with any other manifold $M$ along $T\subset X_G$ and any symplectic torus $T'\subset M$ such that $M\setminus T'$ is simply connected, then the resulting manifold $X_G(M)$ will still have fundamental group $G$ and will, as in Remark \ref{othersum},  have a hypersurface (a copy of $Y_{m+1}$) which violates the nearby existence property.  In \cite[Section 6]{Gom} this construction was used to construct symplectic manifolds with fundamental group $G$ having prescribed Euler characteristic and signature in certain ranges depending on $G$.  Thus these manifolds (including specifically the ones from \cite[Theorems 6.2, 6.3]{Gom}) all admit symplectic forms for which the hypersurface $Y_{m+1}$ violates the nearby existence property.  In particular, for any sufficiently large value of $e$ (with the bound depending on $G$), we get a number proportional to $e$ of homotopy-inequivalent symplectic manifolds with fundamental group $G$ and Euler characteristic $e$ in this fashion; as in \cite[Theorem 6.2]{Gom} these can be taken to be either spin or non-spin.\footnote{In fact, all of these examples are also minimal; this was not known for the non-spin examples when \cite{Gom} was written, but readily follows from \cite{U}.}

\subsection{Aperiodic geography} \label{geog} The symplectic geography problem asks which pairs of integers $(e,\sigma)$ can be realized as the Euler characteristic and signature of a minimal closed simply-connected symplectic four-manifold.  The symplectic sum has been a significant tool in addressing this problem, and often (though not always) the manifolds produced fit into our scheme and so admit aperiodic symplectic forms.  As examples, consider some of the manifolds from \cite{ABBKP}.  

Switching to the coordinates $c=3\sigma+2e$ and $\chi=\frac{e+\sigma}{4}$ that for historical reasons are generally used by geographers, \cite[Theorem 22]{ABBKP} produces for any pair $(\chi,c)$ of nonnegative integers with $0\leq c\leq 8\chi-2$ (with four exceptions) a minimal simply-connected symplectic four-manifold with characteristic numbers $c$ and $\chi$.  Nearly all of these admit aperiodic symplectic forms.  To be more specific, when either $c$ is even and $0\leq c\leq 8\chi-10$, or $c$ is odd and either $1\leq c\leq 8\chi-17$ or $7\leq c\leq 8\chi-11$, 
the manifold constructed in the proof of \cite[Theorem 22]{ABBKP} can be realized as a symplectic sum along square-zero tori $T_1,T_2$ of two manifolds $M_1$ and $M_2$, with $M_1$ simply connected and with the inclusion-induced map $\pi_1(T_2)\to \pi_1(M_2)$ having rank at most $1$.  Consequently Corollary \ref{4cor} and Lemma \ref{irrat} produce aperiodic symplectic forms on all of these manifolds.   

A similar remark applies to the manifolds produced in \cite[Section 9]{ABBKP}.  In particular, the proof of \cite[Theorem 24]{ABBKP} gives a manifold $S$ containing a symplectic square-zero torus $T$ having $\pi_1(S\setminus T)=0$, $\chi(S)=45$, and $c(S)=8\chi(S)+4=364$.  If we take the symplectic sum of $S$ with one of the manifolds $Z_1$ considered in the proof of \cite[Theorem 23]{ABBKP} along the torus $T\subset S$ and $T_2\subset Z_1$, the result will be simply connected and will admit aperiodic symplectic forms by Corollary \ref{4cor} and Lemma \ref{irrat}, and its characteristic numbers can be arranged to be $(c,\chi)=(364+c',45+\chi')$ for any values $c'$ and $\chi'$ with either $c'$ even and $0\leq c'\leq 8\chi'-2$ or $c'$ odd and $19\leq c'\leq 8\chi'-7$.\footnote{\cite{ABBKP} state that one does not need the restriction $c'\geq 19$; however some such restriction seems necessary, as for lower values of $c'$ some of the manifolds that they would use in the role of the manifold $Z_1$ (which are given in \cite[Section 7]{ABBKP}) do not contain suitable square-zero tori $T_2$.}  So we can realize any $(c,\chi)$ with $c$ even and $364\leq c\leq 8\chi+2$ or $c$ odd and $383\leq c\leq 8\chi-3$ in this way.  Instead summing $S$ with one of the manifolds $P_{1+2k,4+2k}$ at the end of \cite[Section 6]{ABBKP} and then perturbing the symplectic form produces aperiodic symplectic forms on manifolds with $385\leq c=8\chi+1$.  To get aperiodic symplectic manifolds with $c=8\chi-1$, use the manifold $B$ of \cite{ABBKP}, containing disjoint tori $T_1$ and $T_2$. First sum $B$ with $S$ along $T_2$ and $T$, which results in a simply-connected manifold $U$, and then sum $U$ with a suitable $P_{1+2k,4+2k}$; again we can perturb the symplectic form on this sum to an aperiodic one by  Corollary \ref{4cor} and Lemma \ref{irrat} since $U$ is simply connected and $\pi_1(P_{1+2k,4+2k})=\mathbb{Z}$. This sum can be arranged to have any characteristic numbers $(\chi,c)$ with $c$ odd and  $391\leq c= 8\chi-1$.

Combining all of the above, we have shown that there exist aperiodic symplectic forms on minimal four-manifolds representing all but finitely many points $(\chi,c)$ in the geography plane on or above the line $c=0$ (recall \cite[Theorem A]{Liu} that any minimal symplectic four-manifold which is not a ruled surface will have $c\geq 0$) and on or below the ``signature 2'' line $c=8\chi+2$.


\subsection{Examples where $Y$ is a nontrivial bundle}\label{nontrivy}  In all of the above four-dimensional examples, we took a symplectic sum along square-zero tori, yielding a hypersurface diffeomorphic to the three-torus in the sum which (for a perturbed symplectic form) violates the nearby existence property.  More generally, Corollary \ref{4cor} and Lemma \ref{irrat} allow us to sum along (equal-area) symplectic tori of opposite self-intersection $\pm k$; this yields a hypersurface $Y$ in the sum diffeomorphic to the a circle bundle over the torus with Euler number $k$.  After a perturbation of the symplectic form, $Y$ will violate the nearby existence property provided that the map $(i_+)_*-(i_-)_*\co H_1(T^2;\R)\to H_1(M;\R)$ vanishes.  

While our examples above all had $k=0$, many of them can be modified to have $k\neq 0$.  Namely, suppose that the input manifold $M$ is a disjoint union of the rational elliptic surface $E(1)$ and another symplectic four-manifold $U$, and that the tori being summed along are the standard fiber $T(1)\subset E(1)$ and a torus $T\subset U$ with trivial inclusion-induced map $H_1(T;\R)\to H_1(U;\R)$.  For example  any of the $E(n)_K$ or $E(n)_{p,q}$ with $n\geq 2$ can be obtained this way (take $U=E(n-1)_K$ or $E(n-1)_{p,q}$); so can many (though not all) of the manifolds from \cite{ABBKP}.  Also, if one starts with a sufficiently redundant presentation of an arbitrary finitely presented group $G$, the symplectic summation that produces the Gompf manifold $X_G$ with fundamental group $G$ will satisfy this description.   Recalling that $E(1)$ is obtained from $\mathbb{C}P^2$ by blowing up $9$ points on a cubic in $\mathbb{C}P^2$ (with $T(1)$ the proper transform of the cubic), we can blow down some number $k\leq 9$ of the exceptional divisors of these blowups to obtain a manifold $E_k\cong \mathbb{C}P^2\#(9-k)\overline{\mathbb{C}P^2}$ with a torus $F_k$ of self-intersection $k$.  At the same time, we can blow up $k$ points on $T\subset U$ to obtain a self-intersection-$(-k)$ torus $T_k\subset U_k$.  The area of $F_k$ will now be larger than that of $T_k$, but that can be repaired by rescaling the symplectic forms.  Moreover it will still be true that $H_1(F_k;\R)\to H_1(E_k;\R)$ and $H_1(T_k;\R)\to H_1(U_k;\R)$ both vanish.  Consequently the result $X_k$ of summing $E_k$ and $U_k$ along $F_k$ and $T_k$ will admit symplectic forms making the ``neck'' $Y_k$ violate the nearby existence property, where $Y_k$ is a principal $S^1$-bundle over $T^2$ with Euler number $k$.  The manifold $X_k$ is diffeomorphic to the manifold $X$ that would have been obtained without performing the $k$ blowups by \cite[Lemma 5.1]{Gom}; in fact the resulting symplectic forms on $X_k$ and $X$ are deformation equivalent by \cite[Proposition 1.6]{MSy}.  (It is not immediately clear whether we can arrange the perturbed forms obtained from Lemma \ref{irrat} to be equal.)

If one prefers examples with $k\neq 0$ that are not derived from examples with $k=0$ in the manner of the previous paragraph, one can easily construct them.  For instance, the elliptic surface $E(3)\to S^2$ has $9$ disjoint sections of self-intersection $-3$ each of which intersects the fiber once positively and transversely.  One can smooth these intersections to obtain a torus $T_3$ of self-intersection $-9$.  Then sum $E(3)$ and $\mathbb{C}P^2$ along $T_3$ and a cubic curve in $\mathbb{C}P^2$.

Finally, we mention an example of a hypersurface $Y$ in a symplectic manifold $(X,\omega)$ which is diffeomorphic to a principal $S^1$-bundle over the torus and violates the nearby existence property but provably \emph{cannot} be obtained by the construction of Lemma \ref{irrat}.  The manifold $X$ is diffeomorphic to the Kodaira--Thurston manifold of Example \ref{kt}.  Thus where $\phi\co T^2\to T^2$ is the map $(x,y)\mapsto (x,x+y)$, and $T_{\phi}=\frac{\mathbb{R}\times T^2}{(t+1,(x,y))\sim (t,\phi(x,y))}$ is the mapping torus of $\phi$, $X$ is diffeomorphic to $(\mathbb{R}/\mathbb{Z})\times T_{\phi}$.  Now the diffeomorphism type of $T_{\phi}$ is unaffected by an isotopy of $\phi$; accordingly choose an irrational number $a$ and define $\phi'\co T^2\to T^2$ by $\phi'(x,y)=(x+a,x+y)$.  Thus $\phi'\co T^2\to T^2$ is a symplectomorphism with respect to the standard symplectic structure $dx\wedge dy$ on $T^2$ which is isotopic to $\phi$ and has no periodic points (the torus is normalized so that $x$ and $y$ vary in $\mathbb{R}/\mathbb{Z}$).   There is a natural fiberwise symplectic form $\omega_{\phi'}$ on $T_{\phi'}$ characterized by the fact that it pulls back under the canonical projection $\mathbb{R}\times T^2\to T_{\phi'}$ to the form $dx\wedge dy$. Then where $X\cong (\mathbb{R}/\mathbb{Z})\times T_{\phi'}$ carries the symplectic form $ds\wedge dt+\omega_{\phi'}$, we let $Y=\{0\}\times T_{\phi'}$.  The characteristic foliation on any $\{s\}\times T_{\phi'}$ is generated by the vector field $\partial_t$; consequently all of its leaves pass through $\{t=0\}$ and the leaf through $(s,x,y)$ is closed iff $(x,y)$ is a periodic point of $\phi'$.  So since we chose $\phi'$ to have no periodic points $Y$ violates the nearby existence property.

Now, up to diffeomorphism, \[ Y=\frac{\mathbb{R}^3}{(t+1,x,y)\sim (t,x,x+y),(t,x+1,y)\sim (t,x,y+1)\sim(t,x,y)}.\]  The additive action of $\mathbb{R}/\mathbb{Z}$ on the third factor makes $Y$ into a principal $S^1$-bundle over $T^2$ with projection given by $(t,x,y)\mapsto (t,x)$.  (The Euler number of $p\co Y\to T^2$ is $-1$.)  Suppose that it were possible to obtain this hypersurface $Y$ violating the nearby existence property via the procedure of Lemma \ref{irrat}.  In particular, there would be some other symplectic form $\tilde{\omega}$ on $X$ such that $\tilde{\omega}|_Y=p^*\omega_{T^2}$ where $\omega_{T^2}$ is a symplectic form on $T^2$.  We show this is impossible:

\begin{prop}  There is no symplectic form $\tilde{\omega}$ on $X$ such that $\tilde{\omega}|_Y$ is the pullback by $p$ of a symplectic form on $T^2$.\end{prop}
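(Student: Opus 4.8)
The plan is to argue by contradiction using the cohomology ring of $X$, which is that of $(\mathbb{R}/\mathbb{Z})\times T_\phi$ with $\phi$ the Dehn twist $(x,y)\mapsto(x,x+y)$. First I would compute $H^*(X;\mathbb{R})$ explicitly. Writing $ds, dt, dx$ for the obvious closed $1$-forms and noting that $dy$ is \emph{not} closed on $T_\phi$ (rather $d(\text{something}) $ corrects it: on the mapping torus the class of the fiber direction $y$ is not globally defined, and one has a relation forcing $[dx]\cup[dt]$-type products to behave specially), the upshot is that $H^1(X;\mathbb{R})$ is spanned by $[ds],[dt],[dx]$ (three-dimensional, since $1$ is an eigenvalue of $\phi^*$ with a one-dimensional eigenspace spanned by $[dx]$), and that in $H^2(X;\mathbb{R})$ one has the relation $[dt]\cup[dx]=0$. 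This last relation is the key structural fact: it comes from the Wang sequence for the mapping torus, exactly as in Example \ref{kt} where the image of $H^1(T_\psi;\mathbb{R})\to H^1(V;\mathbb{R})$ was identified with $\ker(1-\psi^*)$.

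Next I would locate the cohomology class of a putative $\tilde\omega$ with $\tilde\omega|_Y = p^*\omega_{T^2}$. The base $T^2$ of $p\co Y\to T^2$ has coordinates $(t,x)$, so $p^*\omega_{T^2}$ is cohomologous on $Y$ to a nonzero multiple of $[dt]\cup[dx]$ restricted to $Y$. On the other hand, $i^*[\tilde\omega] = [\tilde\omega|_Y]$ where $i\co Y\hookrightarrow X$ is the inclusion. I would then examine the restriction map $i^*\co H^2(X;\mathbb{R})\to H^2(Y;\mathbb{R})$ and, crucially, track where the class $[dt]\cup[dx]$ sits: since $[dt]\cup[dx]=0$ already in $H^2(X;\mathbb{R})$, any class in the image of $i^*$ that equals $p^*$ of a symplectic form on $T^2$ must pull back from a class which vanishes, forcing $[\tilde\omega|_Y]$ to be a boundary — whereas $p^*\omega_{T^2}$ evaluates nontrivially on the fundamental class of $Y$ paired appropriately (or more precisely, $p_![\tilde\omega|_Y]$ would have to be nonzero by nondegeneracy of $\omega_{T^2}$, contradicting that it factors through the vanishing product). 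Concretely: $\tilde\omega$ restricted to $Y$ would have to represent a class $a\in H^2(Y;\mathbb{R})$ with $p_!a=0$ (because $[\omega_{T^2}]$ is in the image of $p^*$, and $p_!\circ p^* = 0$ by the Gysin sequence), yet a form pulled back from a symplectic form on the base has the property that $a$ pairs nontrivially with the fiber class — no, rather the contradiction is cleaner: $p^*\omega_{T^2}$ is nondegenerate on the two-dimensional base directions, so $[\tilde\omega|_Y]^{?}$... I would instead phrase it as: $i^*[\tilde\omega]$ must be nonzero in $H^2(Y;\mathbb{R})$ since $\omega_{T^2}$ is a symplectic (hence de Rham nontrivial) form on the closed surface $T^2$ and $p$ has connected fibers so $p^*$ is injective on $H^2$; but I will show every class in $\mathrm{Im}(i^*\co H^2(X)\to H^2(Y))$ that lies in $\mathrm{Im}(p^*)$ is zero, because the composite $H^2(X;\mathbb{R})\xrightarrow{i^*}H^2(Y;\mathbb{R})\xrightarrow{p_!}H^1(T^2;\mathbb{R})$ has image contained in $\ker(1-\phi^*)$ (one-dimensional, spanned by $[dx]$) while $p^*[\omega_{T^2}]$ has $p_!$ equal to zero — so to get a contradiction I need $i^*[\tilde\omega]\notin\mathrm{Im}(p^*)$ forced, OR $i^*[\tilde\omega]=0$ forced; the genuine obstruction is that $i^*[\tilde\omega]$ restricted to a fiber $S^1$ of $p$ must vanish (as $p^*\omega_{T^2}$ kills fiber tangent vectors) \emph{and} $[\tilde\omega]|_Y$ must be a nonzero class whose cup with the Euler class is computed by the Gysin relation; combined with the ring relation $[dt][dx]=0$ in $H^2(X)$ this is impossible.

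Let me state the cleanest version I would write up. By the Gysin sequence for $p\co Y\to T^2$ (Euler number $-1$, so Euler class $e$ a generator of $H^2(T^2;\mathbb{R})$), the map $p^*\co H^2(T^2;\mathbb{R})\to H^2(Y;\mathbb{R})$ is \emph{zero} because $e\cup\colon H^0(T^2)\to H^2(T^2)$ is an isomorphism, hence $p^*$ on $H^2$ has image zero and $p_!\co H^2(Y;\mathbb{R})\to H^1(T^2;\mathbb{R})$ is injective. Therefore if $\tilde\omega|_Y = p^*\omega_{T^2}$ then $[\tilde\omega|_Y] = p^*[\omega_{T^2}] = 0$ in $H^2(Y;\mathbb{R})$, so $i^*[\tilde\omega]=0$, i.e.\ $[\tilde\omega]\in\ker(i^*\co H^2(X;\mathbb{R})\to H^2(Y;\mathbb{R}))$. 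The final step, which I expect to be the main obstacle, is to rule this out: I must show that no symplectic class on $X$ restricts trivially to $Y$. Here I would use that $Y$ is the preimage under $p'\co X=(\mathbb{R}/\mathbb{Z})\times T_{\phi'}\to(\mathbb{R}/\mathbb{Z})\times(\mathbb{R}/\mathbb{Z})$ — projecting $T_{\phi'}\to\mathbb{R}/\mathbb{Z}$ and remembering the first factor — of a point $(0)\times(\mathbb{R}/\mathbb{Z})$, hmm; more robustly, $Y=\{0\}\times T_{\phi'}$ is a fiber of the projection $X\to\mathbb{R}/\mathbb{Z}$ onto the first ($s$-)circle, so $X$ deformation retracts onto no smaller piece but the point is: the $3$-manifold $Y$ carries $\tilde\omega|_Y$, a closed $2$-form, and I claim $\int_Y \tilde\omega\wedge\alpha \neq 0$ for a suitable closed $1$-form $\alpha$ on $X$ pulled back from $Y$ — indeed $\int_X\tilde\omega\wedge\tilde\omega>0$ and using the product structure $X\cong(\mathbb{R}/\mathbb{Z})_s\times Y$ with $\tilde\omega = ds\wedge\beta + (\text{form on }Y)$ for a closed $1$-form $\beta$ on $Y$, Fubini gives $\int_X\tilde\omega^2 = 2\int_{\mathbb{R}/\mathbb{Z}}ds\cdot\int_Y\beta\wedge(\tilde\omega|_Y)$, so $\int_Y\beta\wedge(\tilde\omega|_Y)\neq 0$, forcing $[\tilde\omega|_Y]\neq 0$ in $H^2(Y;\mathbb{R})$, contradicting $[\tilde\omega|_Y]=p^*[\omega_{T^2}]=0$. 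This completes the argument; the delicate point to get right is the Künneth/Fubini decomposition of $[\tilde\omega]$ across $X\cong S^1\times Y$ and the verification that $p^*$ genuinely vanishes on $H^2$, both of which are short once the Euler number $-1$ is pinned down.
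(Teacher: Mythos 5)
Your final argument is correct, but it takes a genuinely different route from the paper. The paper argues by surgery: it performs the symplectic cut of $X$ along $Y$, observes that the resulting manifold is an $S^2$-bundle over $T^2$ containing the two symplectic tori $T_{\pm}$ of square $\pm 1$ which the cut construction forces to have \emph{equal} area, and then uses $b_2=2$ together with a positivity (light-cone type) computation in $H_2$ to show the areas must in fact be unequal --- a contradiction. You instead argue purely cohomologically: since the Euler class $e$ of $p\co Y\to T^2$ is nonzero, the Gysin sequence (exactness at $H^2(T^2;\R)\xrightarrow{p^*}H^2(Y;\R)$, with $e\cup\co H^0\to H^2$ an isomorphism over $\R$) forces $p^*=0$ on $H^2$, so any putative $\tilde\omega$ would satisfy $[\tilde\omega|_Y]=0$ in $H^2(Y;\R)$; on the other hand $X\cong S^1\times Y$, and writing $[\tilde\omega]=a+[ds]\cup b$ by K\"unneth one gets $\int_X\tilde\omega^2=\pm 2\int_Y b\cup a$ with $a=i^*[\tilde\omega]$ (the $a^2$ term dies in $H^4(Y)=0$), so nondegeneracy forces $a\neq 0$. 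Your approach is shorter and more elementary (no symplectic cutting, no classification of $S^2$-bundles), and it proves slightly more: no symplectic form on $X$ can restrict to $Y$ with vanishing cohomology class, so the conclusion only needs $e\neq 0$ and the product structure $X\cong S^1\times Y$; the paper's proof, by contrast, stays within the cut/sum framework that organizes the rest of the paper. Two points to clean up in your write-up: the pointwise splitting ``$\tilde\omega=ds\wedge\beta+(\text{form on }Y)$ with $\beta$ closed and $s$-independent'' is not valid for a general symplectic form --- do this step at the level of cohomology classes via K\"unneth (or replace $\tilde\omega$ by a cohomologous form of that shape), which is all the Fubini computation needs; and the exploratory middle paragraphs (including the erroneous aside that $p^*$ is injective on $H^2$, which your final version correctly reverses) should simply be deleted.
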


\begin{proof}  Suppose that $\tilde{\omega}$ were such a symplectic form.  Performing the symplectic cut of $X$ along $Y$ would give another symplectic manifold $(M,\omega')$. 
Smoothly, $M$ is constructed from $X$ by cutting $X$ open along $Y$ to obtain a manifold $M^0$ with two boundary components diffeomorphic to $Y$ but with opposite orientations, and filling the two boundary components with the disc bundles $\pi_{\pm}\co N_{\pm}\to T^2$ having Euler numbers $\pm 1$; the zero sections $T_{\pm}$ of $N_{\pm}$ are symplectic tori in $M$ of square $\pm 1$, with \emph{equal area}, and the boundaries $\partial N_{\pm}$ have canonical identifications with the (unoriented) smooth manifold $Y$, with the restrictions of $\pi_{\pm}$ coinciding with the projection $p\co Y\to T^2$.  Now in our case $M^0$ is just $[0,1]\times Y$.  We can define a bundle projection $\pi\co M\to T^2$ by setting $\pi|_{N_{\pm}}=\pi_{\pm}$ and letting $\pi|_{[0,1]\times Y}$ equal the composition of the projection $[0,1]\times Y\to Y$ with $p\co Y\to T^2$.  If $(x,y)\in T^2$, then $\pi^{-1}(x,y)$ is the union of the discs $\pi_{\pm}^{-1}(x,y)$ and the cylinder $p^{-1}(x,y)\times [0,1]$; thus the fibers of $\pi$ are spheres and $M$ is the total space of an $S^2$-bundle over the torus.  

Now any $S^2$-bundle over the torus has $b_2=2$.  Since $T_{\pm}$ have self-intersection $\pm 1$, they represent linearly independent classes in $H_2(M;\R)$, so they span $H_2(M;\R)$.  The class $F=[T_+]-[T_-]$ has intersection number $+1$ with both $T_{\pm}$.  An easy linear algebra exercise shows that the facts that the symplectic form $\omega'$ has $\int_M\omega'\wedge\omega'>0$ and $\int_{T_+}\omega'>0 $ imply that $\int_{F}\omega'>0$, and hence that $T_+$ and $T_-$ have unequal area.  But for the manifold $M$ to have been obtained by the symplectic cut construction $T_{\pm}$ would need to have equal area, a contradiction.
\end{proof}

\section*{Appendix: Proof of Theorem \ref{llthm}}

We close the paper by assembling the facts from the literature that are necessary to prove the following theorem:

\begin{appthm}[Li-Liu] Any symplectic four-manifold with $b^+=1$ is $GW_g$-connected for some $g$.  Consequently all such symplectic four-manifolds have finite Hofer--Zehnder capacity.
\end{appthm}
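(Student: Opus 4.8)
\emph{Proof plan.} The ``consequently'' clause is immediate: as soon as $(X,\omega)$ is $GW_g$-connected, say $GW^X_{g,k+2,A}([pt],[pt],\beta_1,\dots,\beta_k;B)\ne 0$, Theorem~\ref{luthm} gives $c_{HZ}(X,\omega)\le\langle[\omega],A\rangle<\infty$. So the task is to show that every closed symplectic four-manifold with $b^+=1$ is $GW_g$-connected for some $g$. First I would reduce to the minimal case: blowing up changes neither $b^+$ nor (by the blow-up formula for Gromov--Witten invariants, equivalently Taubes' blow-up formula for the Gromov invariant) the property of being $GW_g$-connected, since one can pull back a witnessing class and arrange both marked points together with a generic holomorphic representative to avoid the exceptional divisors; and every symplectic four-manifold is a blow-up of a minimal one. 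So assume $X$ is minimal.

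Next I would invoke the structure theory of minimal symplectic four-manifolds with $b^+=1$ (due to A.-K.~Liu; see \cite{Liu}): such an $X$ is either (a) rational or ruled, or else (b) satisfies $K_X^2\ge0$ and $K_X\cdot[\omega]\ge0$. In case (a) one writes down explicit nonvanishing Gromov--Witten invariants with two point insertions. The rational cases, and more generally all toric ones, are already covered with $g=0$ by the remarks following Theorem~\ref{luthm}; the remaining ruled surfaces are $S^2$-bundles over a surface $\Sigma_h$ of genus $h\ge1$, and there genus-$0$ curves are confined to fibers, so one instead uses a section class of self-intersection at least $h+1$, which gives $g=h$ and a positive classical enumerative count of genus-$h$ multisections through the prescribed points. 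Throughout case (a), $c_{HZ}$ is then bounded by the $\omega$-area of the chosen class.

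The real content is case (b): $X$ minimal, $b^+=1$, not rational or ruled, with $K_X^2\ge0$. Here the inputs are Taubes' equivalence $SW_X=Gr_X$ \cite{Tbook} and the nonvanishing of the Seiberg--Witten invariant of the canonical spin$^{c}$ structure on a symplectic manifold. The subtlety is that when $b^+=1$ the Seiberg--Witten invariant is chamber-dependent, so the bare Taubes nonvanishing $Gr_X(K_X)=\pm1$ in the symplectic chamber -- which carries \emph{no} point constraints and so by itself says nothing about $GW_g$-connectedness -- must be supplemented by Li and Liu's general wall-crossing formula (valid for arbitrary $b_1$) and their determination of the symplectic surface cone, which together yield the nonvanishing of a Gromov--Witten invariant counting curves of some genus $g$ through two generic points. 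When $X$ is K\"ahler this can instead be read off classically from a sufficiently ample linear system; the wall-crossing machinery is needed precisely to cover the non-K\"ahler examples, such as the Fintushel--Stern knot-surgery manifolds $E(1)_K$. Feeding the resulting invariant into Theorem~\ref{luthm} completes the argument.

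The hard part is thus exactly this last step: extracting from the Seiberg--Witten package a Gromov--Witten invariant carrying the two point insertions demanded by the definition of $GW_g$-connectedness and by Theorem~\ref{luthm}, rather than the ``empty'' invariant $Gr_X(K_X)$ that Taubes directly provides. This is where essentially all of the work -- and all of the appeal to Li and Liu -- is concentrated; the reduction to minimal models and the rational/ruled cases are bookkeeping. For this reason the proof in the appendix will consist mainly of citing and stitching together results that are already in the literature.
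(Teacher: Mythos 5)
Your plan correctly isolates the crux---producing a nonvanishing invariant that carries two point insertions, rather than the constraint-free $Gr(\kappa_X)=\pm 1$ that Taubes directly provides---but it then leaves exactly that step unproven. You assert that Li--Liu's wall-crossing formula and surface-cone results ``together yield'' the desired invariant, without saying to which class the wall-crossing is applied, why the resulting moduli space is large enough that two point constraints survive, or how a Gromov--Taubes count of possibly disconnected (and possibly multiply covered) curves becomes a connected-curve invariant $GW_{g,k+2,A}([pt],[pt],\ldots;B)$ as the definition of $GW_g$-connectedness and Theorem \ref{luthm} require. These are precisely the points where the paper's proof does its work: one deforms and rescales $\omega$ so that $[\omega]$ is integral and $\langle[\omega]\cup[\omega],[X]\rangle\geq 4+b_1(X)+\langle[\omega]\cup\kappa_X,[X]\rangle$, then applies the chain $Gr'([\omega])=SW_-([\omega])=\pm SW_+(\kappa_X-[\omega])=\pm 1\pm SW_-(\kappa_X-[\omega])=\pm 1\pm Gr'(\kappa_X-[\omega])$ (Taubes \cite{Tbook} in the $b^+=1$ form of \cite{LL99}, wall crossing, conjugation symmetry as in \cite{LL01}); the last term vanishes because $\kappa_X-[\omega]$ has negative symplectic area, so $Gr'([\omega])=\pm 1$, and the rescaling guarantees $d([\omega])-\frac{1}{2}b_1(X)\geq 2$ point insertions. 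Positivity of $[\omega]$ on exceptional spheres identifies $Gr'([\omega])$ with $Gr([\omega])$, the light cone lemma (as in \cite{M97}) forces the counted curves to be connected, and \cite{IP97} converts $Gr([\omega])=\pm 1$ into the required nonvanishing connected Gromov--Witten invariant through two points. None of this is in your outline; naming the hard step and citing the authors who could in principle supply it is not a proof of it.

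Moreover, your preliminary reductions take a genuinely different (and partly unnecessary) route from the paper, and they import their own unproved claims. The paper never reduces to the minimal case and never invokes Liu's dichotomy between rational/ruled manifolds and those with $K^2\geq 0$: the argument sketched above is uniform over all closed symplectic four-manifolds with $b^+=1$, minimality being irrelevant because $Gr'([\omega])=Gr([\omega])$ follows from $[\omega]$ pairing positively with exceptional spheres, not from their absence. If you insist on your case analysis, you owe a proof of the blow-up reduction for $GW_g$-connectedness and of the asserted positive count of genus-$h$ multisections through two points on irrational ruled surfaces; both are plausible but neither is carried out. As written, the proposal is a plan whose only genuinely hard step---the one the appendix exists to supply---is missing.
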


\begin{proof} Let $(X,\omega)$ be such a manifold. Write $\kappa_X$ for the canonical class of $X$ (\emph{i.e.}, the negative of the first Chern class of $TX$ with respect to a compatible almost complex structure). Since $\kappa_X$ and the Gromov--Witten invariants are unchanged under deformation of the symplectic form we may without loss of generality assume that the de Rham cohomology class $[\omega]$ lies in the image of the coefficient-extension map $H^2(X;\Z)\to H^2(X;\R)$ and that (by rescaling $\omega$) \[ \langle [\omega]\cup [\omega],[X]\rangle \geq 4+b_1(X)+\langle [\omega]\cup \kappa_X,[M]\rangle.\]  Let $\gamma_1,\ldots,\gamma_{b_1(X)}$ be a basis for $H_1(X;\Z)/torsion$.  For any class $e\in H^2(X;\Z)\setminus \{0\}$ consider the modified Gromov--Taubes invariant \[ Gr'(e)(\gamma_1\wedge\cdots\wedge \gamma_{b_1(X)}) \] of \cite[Definition 3.4]{LL99}.  We also have corresponding Seiberg--Witten invariants $SW_{\pm}(e)(\gamma_1\wedge\cdots\wedge \gamma_{b_1(X)})$ associated to the spin$^c$ structure obtained as the tensor product of the canonical spin$^c$ structure $(X,\omega)$ with a line bundle of Chern class $e$, with the signs $\pm$ referring to the chamber determined by $\omega$.  As is summarized by \cite[Lemma 2.3, Theorem 2.9, and Lemma 3.3]{LL01}, Taubes' equivalence between $SW$ and $Gr$ \cite{Tbook} (as modified for the $b^+=1$ case by \cite{LL99}) along with the wall crossing formula and conjugation symmetry for Seiberg--Witten invariants, gives the following chain of equalities: \begin{align*}
Gr'(e)(\gamma_1\wedge\cdots\wedge \gamma_{b_1(X)})&=SW_-(e)(\gamma_1\wedge\cdots\wedge \gamma_{b_1(X)})
\\&=\pm SW_+(\kappa_X-e)(\gamma_1\wedge\cdots\wedge \gamma_{b_1(X)})\\&=\pm 1\pm SW_-(\kappa_X-e)(\gamma_1\wedge\cdots\wedge \gamma_{b_1(X)})\\&=\pm 1\pm Gr'(e)(\kappa_X-e)(\gamma_1\wedge\cdots\wedge \gamma_{b_1(X)}).\end{align*} 

Now specialize to $e=[\omega]$.  The invariant $Gr'(\kappa_X-[\omega])$ enumerates certain pseudoholomorphic representatives of the Poincar\'e dual to $\kappa_X-[\omega]$, possibly with several components some of which may be multiply covered; our modification of $\omega$ at the start of the proof implies that $PD(\kappa_X-[\omega])$ has negative symplectic area, so $Gr'(\kappa_X-[\omega])$ necessarily vanishes.  Consequently  $Gr'([\omega])(\gamma_1\wedge\cdots\wedge \gamma_{b_1(X)})=\pm 1$.  Now since $[\omega]$ evaluates positively on any symplectic surface (and in particular on any symplectic sphere of square $-1$), the modified Gromov--Taubes invariant $Gr'([\omega])$ coincides with the original Gromov--Taubes invariant $Gr([\omega])$ from \cite{Tbook}.  So we in fact have \[ 
Gr([\omega])(\gamma_1\wedge\cdots\wedge \gamma_{b_1(X)})=\pm 1 \] Where  $d([\omega])=\frac{1}{2}\langle [\omega]^2-\kappa_X\cup [\omega],[X]\rangle$, the latter invariant enumerates pseudoholomorphic representatives of $[\omega]$ which pass through generic $1$-cycles representing the $\gamma_i$ and through a generic set of $d([\omega])-\frac{1}{2}b_1(X)$ distinct points.  Note that our assumption on $[\omega]$ ensures that $d([\omega])-\frac{1}{2}b_1(X)\geq 2$.  

Recall that in general the pseudoholomorphic curves enumerated by the Gromov--Taubes invariant $Gr(e)$ can be disconnected, but are embedded except for the fact that some of their components may be multiply-covered square-zero tori (however such tori still miss all other components). However, as in \cite[Lemma 2.2]{M97}, for the particular case of $Gr([\omega])$ the curves in question \emph{will} be connected.  Indeed the curves cannot contain any exceptional sphere components since $[\omega]$ evaluates positively on all exceptional spheres.  Hence all components have nonnegative self-intersection, so their homology classes belong to the closure of the forward positive cone.  But then the light cone lemma shows that any two distinct components would need to intersect, contradicting embeddedness, unless all components were homologically proportional and had self-intersection zero, which is impossible since $\langle [\omega]\cup [\omega],[X]\rangle>0$.  Thus the curves in question can have only one component.

Now \cite{IP97} expresses $Gr([\omega])$ as a sum of contributions coming from various Gromov--Witten invariants, but the previous paragraph shows that all those contributions describing curves with more than one component necessarily vanish, leaving only the contribution of \[ GW_{g,d(\omega)+\frac{1}{2}b_1(X),PD[\omega]}([pt],\ldots,[pt],\gamma_1,\ldots,\gamma_{b_1(X)};[\bar{M}_{g,d(\omega)+\frac{1}{2}b_1(X)}]),\] where the genus $g$ is chosen via the adjunction formula to have the property that a genus-$g$ pseudoholomorphic representative of $PD[\omega]$ will be embedded: $g=1+\frac{1}{2}\langle [\omega]\cup[\omega]+\kappa_X\cup[\omega],[X]\rangle$.      Thus \[ GW_{g,d(\omega)+\frac{1}{2}b_1(X),PD[\omega]}([pt],\ldots,[pt],\gamma_1,\ldots,\gamma_{b_1(X)};[\bar{M}_{g,d(\omega)+\frac{1}{2}b_1(X)}])=\pm 1.\]  The number of appearances of $[pt]$ in this Gromov--Witten invariant is $d([\omega])-\frac{1}{2}b_1(X)$, which is at least two, so this completes the proof.
\end{proof}

\end{document}